\newcommand{\imbb}[1]{{\color{black}#1}}
\newcommand{\av}{\mathsf a}
\newcommand{\bb}{\mathsf b}
\newcommand{\cc}{\mathsf c}
\newcommand{\dd}{\mathsf d}
\newcommand{\e}{\mathsf e}
\newcommand{\g}{\mathsf g}
\newcommand{\oo}{\mathsf o}
\newcommand{\p}{\mathsf p}
\newcommand{\q}{\mathsf q}
\newcommand{\rr}{\mathsf r}
\newcommand{\s}{\mathsf s}
\newcommand{\uu}{\mathsf u}
\newcommand{\vv}{\mathsf v}
\newcommand{\w}{\mathsf w}
\newcommand{\x}{\mathsf x}
\newcommand{\y}{\mathsf y}
\newcommand{\z}{\mathsf z}
\newcommand{\eps}{\varepsilon}
\newcommand{\Ab}{\mathsf A}
\newcommand{\Db}{\mathsf D}
\newcommand{\Ib}{\mathsf I}
\newcommand{\Mb}{\mathsf M}
\newcommand{\Ob}{\mathsf O}
\newcommand{\Pb}{\mathsf P}
\newcommand{\Qb}{\mathsf Q}
\newcommand{\Ub}{\mathsf U}
\newcommand{\Vb}{\mathsf V}
\newcommand{\Xb}{\mathsf X}
\newcommand{\R}{\mathbb{R}}
\newcommand{\OM}{C}
\newcommand{\LMO}{\tx{LMO}}
\DeclareMathOperator*{\argmax}{arg\,max}
\DeclareMathOperator*{\argmin}{arg\,min}
\DeclareMathOperator*{\secondmax}{secondmax}
\newcommand{\ps}[2]{#1^\intercal #2}
\newcommand{\tx}[1]{\textnormal{#1}}
\newcommand{\n}[1]{\| #1 \|}
\newcommand{\G}{\mathcal{G}}
\newcommand{\F}{\mathcal{F}}
\newcommand{\sm}{\setminus}
\newcommand{\irg}[2]{[{#1}\! : \! {#2}]}
\newcommand{\T}{ ^{\intercal}}
\DeclareMathOperator*{\rank}{rank}
\DeclareMathOperator*{\psd}{psd}
\DeclareMathOperator*{\conv}{conv}
\DeclareMathOperator*{\sign}{sign}
\DeclareMathOperator{\dist}{dist}
\DeclareMathOperator{\aff}{aff}
\DeclareMathOperator{\diam}{diam}
\DeclareMathOperator{\supp}{supp}
\DeclareMathOperator{\intp}{int}
\DeclareMathOperator{\ri}{ri}
\DeclareMathOperator{\PWidth}{PWidth}
\DeclareMathOperator{\faces}{faces}
\DeclareMathOperator{\tr}{tr}
\begin{document}
	
	\title{Frank-Wolfe and friends:\\[0.2em] a journey into projection-free\\[0.2em] \imbb{first-order} optimization \imbb{methods}}
	
	\titlerunning{Frank-Wolfe and friends}        % if too long for running head
	
	\author{Immanuel. M.~Bomze         \and
		Francesco~Rinaldi \and %etc.
		Damiano~Zeffiro
	}
	
	\authorrunning{Immanuel~M.~Bomze, Francesco~Rinaldi and Damiano~Zeffiro} % if too long for running head
	
	\institute{Immanuel~M.~Bomze \at
		ISOR, VCOR \& ds:UniVie, Universit\"{a}t Wien, Austria\\
		\email{immanuel.bomze@univie.ac.at}  %  \\
		%             \emph{Present address:} of F. Author  %  if needed
		\and
		Francesco~Rinaldi \at
		Dipartimento di Matematica ``Tullio Levi-Civita'', Universit\`a
		di Padova, Italy 	\\
		\email{rinaldi@math.unipd.it}
		\and
		Damiano~Zeffiro \at
		Dipartimento di Matematica ``Tullio Levi-Civita'', Universit\`a
		di Padova, Italy 		\\
		\email{damiano.zeffiro@math.unipd.it}
	}
	
	\date{Received: date / Accepted: date}
	% The correct dates will be entered by the editor

	\maketitle
	
	\begin{abstract} Invented some 65 years ago in a seminal paper by Marguerite Straus-Frank and Philip Wolfe, the Frank-Wolfe method recently enjoys a remarkable revival, fuelled by the need of fast and reliable first-order optimization methods in Data Science and other relevant application areas. This review tries to explain the success of this approach by illustrating versatility and applicability in a wide range of contexts, combined with an account on recent progress in variants, both improving on the speed and efficiency of this surprisingly simple principle of first-order optimization.
	
		\keywords{ First-order \imbb{methods} \and Projection-free methods  \and Structured optimization \and  Conditional gradient \and Sparse optimization}
		% \PACS{PACS code1 \and PACS code2 \and more}
		% \subclass{MSC code1 \and MSC code2 \and more}
	\end{abstract}
	\clearpage
	\section{Introduction}
	
	In their seminal work \cite{frank1956algorithm}, Marguerite Straus-Frank and Philip Wolfe introduced a first-order algorithm  for the minimization of convex quadratic objectives over polytopes, now known as Frank-Wolfe (FW) method. The main idea of the method is \imbb{simple:} to generate a sequence of feasible iterates by moving at every step towards a minimizer of a linearized objective, the so-called FW vertex.
	%In the original work, the polytope was described with a set of linear constraints, so that the most expensive operation was applying the simplex algorithm to solve linear subproblems. 
	Subsequent works, partly motivated by applications in optimal control theory (see \cite{dunn1979rates} for references), generalized the method to smooth \imbb{(possibly non-convex)} optimization over closed subsets of Banach spaces admitting a linear minimization oracle (see \cite{demianov1970approximate,dunn1978conditional}). 
	
	Furthermore, while the ${\mathcal O}(1/k)$ rate in the original article was proved to be optimal when the solution lies on the boundary of the feasible set \cite{canon1968tight}, improved rates were given in a variety of different settings. In \cite{levitin1966constrained} and \cite{demianov1970approximate}, a linear convergence rate was proved over strongly convex domains assuming a lower bound on the gradient norm, a result then extended in \cite{dunn1979rates} under more general gradient inequalities.   In \cite{guelat1986some}, linear convergence of the method was proved for strongly convex objectives with the minimum \imbb{obtained} in the relative interior of the feasible set. 
	
	The slow convergence behaviour for objectives with solution on the boundary motivated the introduction of several variants, the most popular being Wolfe's away step \cite{wolfe1970convergence}.
	Wolfe's idea was to move away from bad vertices, in case a step of the FW method moving towards good vertices did not lead to sufficient improvement on the objective.   This idea was successfully applied in several network equilibrium problems, where linear minimization can be achieved by solving a min-cost flow problem (see \cite{fukushima1984modified} and references therein). In \cite{guelat1986some}, some ideas already sketched by Wolfe were formalized to prove linear convergence of the Wolfe's away step method  and  identification of the
	face containing the solution \imbb{in finite time, under some suitable strict complementarity assumptions.} 
	
	In recent years, the FW method has regained popularity thanks to its ability \imbb{to handle the structured constraints appearing in machine learning and data science applications efficiently.} Examples include LASSO, SVM training,  matrix completion, minimum enclosing ball, density mixture estimation, cluster detection, to name just a few (see Section \ref{Sec:examples} for further details). 
	
	One of the main features of the FW algorithm is its ability to naturally identify  sparse and structured (approximate) solutions. For instance, if the optimization domain is the simplex, then after $k$ steps the cardinality of the support of the last iterate generated by the method is at most $k + 1$. Most importantly, in this setting every vertex added to the support at every iteration must be the best possible in some sense, a property that connects the  method with many greedy optimization schemes \cite{clarkson2010coresets}.  This makes the FW method pretty eff\imbb{icient} on the \imbb{abovementioned problem} class. Indeed,
	the combination of structured solutions with often noisy data makes the sparse approximations found by the method possibly more desirable than high precision solutions generated by a faster converging approach. In some cases, like in cluster detection (see, e.g., \cite{bomze1997evolution}), finding the support of the solution is actually enough to solve the problem independently from the precision achieved.  
	
	Another important feature is that the linear minimization used in the method is often cheaper than the projections required by projected-gradient methods. It is important to notice that, even when these two operations have the same complexity, constants defining the related bounds can differ significantly (see \cite{combettes2021complexity} for some examples and tests). When dealing with large scale problems, the FW method hence has a much smaller per-iteration cost with respect to projected-gradient methods. For this reason, FW methods fall into the category of {\em projection-free methods}~\cite{lan2020first}. Furthermore, the method can be used to approximately solve quadratic subproblems in accelerated schemes, an approach usually referred to as conditional gradient sliding (see, e.g., \cite{carderera2020second,lan2016conditional}). 
	
\subsection{\imbb{Organisation of the paper}	}
	The \imbb{present review is not intended to provide an exhaustive literature survey, but rather as an advanced tutorial demonstrating versatility and power of this approach. The article} is structured as follows: in Section \ref{s:pgs}, we introduce the classic FW method, together with a general scheme for all the methods we consider. In Section \ref{Sec:examples}, we present applications from classic optimization to more recent machine learning problems. In Section \ref{s:stp}, we review some important stepsizes for first order methods. In Section \ref{s:cFW}, we discuss the main theoretical results about the FW method and the most popular variants, including the ${\mathcal O}(1/k)$ convergence rate for convex objectives, affine invariance, the sparse approximation property, and support identification. In Section \ref{s:ir} we illustrate some recent improvements on the ${\mathcal O}(1/k)$ convergence rate. 
% 	In Section \ref{s:gFW}, we describe a generalization of the classic FW to the composite non smooth optimization setting, and in particular its correspondence with mirror descent via Fenchel duality. 
	Finally, in Section \ref{s:E} we present recent FW variants fitting different optimization frameworks, in particular block coordinate, distributed, accelerated, and trace norm optimization.

\subsection{\imbb{Notation} 	}
\imbb{For any integers $a$ and $b$, denote by $\irg ab = \{ x \mbox{ integer}: a\le x\le b\}$ the integer range between them. For a set $V$, the power set $2^V$ denotes the system all subsets of $V$, whereas for any positive integer $s\in \mathbb{N}$ we set
${V\choose s} := \{ S\in 2^V : |S| = s\}$,
with $|S|$ denoting the number of elements in $S$. Matrices are denoted by capital sans-serif letters (e.g., the zero matrix $\Ob$, or the $n\times n$ identity matrix $\Ib_n$ with columns $\e_i$ the length of which should be clear from the context). The all-ones vector is $\e:=\sum_i \e_i\in \R^n$. Generally, vectors are always denoted by boldface sans-serif letters $\x$, and their transpose
by $\x\T$. The Euclidean norm of $\x$ is then $\n{\x} := \sqrt{\x\T\x}$ whereas the general $p$-norm is denoted by ${\n{\x}}_p$ for any $p\ge 1$ (so  ${\n{\x}}_2=\n{\x}$). By contrast, the so-called zero-norm simply counts the number of nonzero entries:
$${\n{\x}}_0 := |\{ i\in \irg 1n : x_i\neq 0\}|\, .$$
%Inner products sometimes with $\langle \x,\y\rangle := \x\T \y$.} 	
For a vector $\dd$ we denote as $\widehat{\dd} :=\frac 1{\n{\dd}}\,\dd$ its normalization, with the convention $\widehat{\dd} = \oo$ if $\dd = \oo$. Here $\oo$ denotes the zero vector. In context of symmetric matrices, ``$\psd$'' abbreviates ``positive-semidefinite''.}

	\section{Problem and general scheme} \label{s:pgs}
	We consider the following problem: 
	\begin{equation} \label{fxOM}
		\min_{\x \in \OM} f(\x)
	\end{equation}
	where $\OM$ is a convex and \imbb{compact} (i.e. bounded and closed) subset of $\R^n$ and, unless specified otherwise, $f$ is a differentiable function having Lipschitz continuous gradient with constant $L>0$: $$\n{\nabla f(\x) - \nabla f(\y)} \leq L \n{\x - \y}\quad \imbb{\mbox{for all }\{\x,\y\} \subset \OM\, .}$$
	%In the rest of the review
	\imbb{Throughout the article}, we denote by $\x^*$ a \imbb{(global)} solution to \eqref{fxOM} and use the symbol $f^*:=~f(\x^*)$ as a shorthand for the corresponding optimal value. 
	
	The general scheme of the first-order methods we consider for problem \eqref{fxOM}, reported in Algorithm \ref{alg:GS}, \imbb{is based upon a set $F(\x,\g)$ of directions feasible at $\x$ using first-order local information on $f$ around $\x$, in the smooth case $\g=\nabla f(\x)$. From this set, a particular $\dd\in F(\x,\g)$ is selected}, with the maximal stepsize $\alpha^{\max}$ possibly dependent from auxiliary information available to the method \imbb{(at iteration $k$, we thus write $\alpha^{\max}_k$)}, and not always equal to the maximal feasible stepsize.
%	To ease notation, we will abbreviate 
%	$\alpha^{max}_k:= \alpha^{max}(\x_k,\dd_k)$ at iteration $k$.}
	
	%takes the following form; generally, a direction $\dd$ feasible at $\x$ is selected from  the set of possible directions $F(\x, \g)$ at $\x$, by a method \imbb{providing a cone} $F$ using additional information $\g$; typically $\g$ provides first-order local information on $f$ around $\x$, in the differentiable case $\g=-\nabla f(\x)$.
	%when $x_k = x$ and $-\nabla f(x_k) = g$ for some $k$. 
		\begin{algorithm}[H]
			\caption{\texttt{First-order method}}
			\label{alg:GS}
			\begin{algorithmic}
				\par\vspace*{0.1cm}
				\item$\,\,\,1$\hspace*{0.1truecm} Choose a point $x_0\in \Omega$
				\item$\,\,\,2$\hspace*{0.1truecm} For $k=0,\ldots$
				\item$\,\,\,3$\hspace*{0.9truecm} If $\x_k$ satisfies some specific condition, then STOP 
				\item$\,\,\,4$\hspace*{0.9truecm} Choose $\dd_k \in F(\x_k, \nabla f(\x_k))$
				\item$\,\,\,5$\hspace*{0.9truecm} Set $\x_{k+1}=\x_k+\alpha_k \dd_k$, with $\alpha_k\in (0,\alpha^{\max}_k]$ a suitably chosen stepsize
				\item$\,\,\,6$\hspace*{0.1truecm} End for
				\par\vspace*{0.1cm}
			\end{algorithmic}
		\end{algorithm}

	\subsection{The classical Frank-Wolfe method}
	The classical FW method for minimization of a smooth objective $f$ generates a sequence of feasible points $\{\x_k\}$ following the scheme of Algorithm \ref{alg:FW}. At the iteration $k$ it moves toward a vertex \imbb{i.e., an extreme point,} of the feasible set minimizing the scalar product with the current gradient $\nabla f(\x_k)$. It therefore  makes use of a linear minimization oracle (LMO) for the feasible set $\OM$ 
	\begin{equation}\label{eq:linsubp}
	\imbb{	\LMO_{\OM}(\g) \in \argmin_{\z \in \OM} \ps{\g}{\z} }\, ,
	\end{equation} 
	defining the descent direction as
	\begin{equation} \label{fwdk}
		\dd_k = \dd_k^{FW} := \s_k - \x_k, \ \ \s_k \in \LMO_{\OM}(\nabla f(x_k)) \, .
	\end{equation}
	In particular, the update at step 6 can be written as
	\begin{equation}
		\x_{k + 1} = \x_k + \alpha_k (\s_k - \x_k) = \alpha_k \s_k + (1 - \alpha_k) \x_k
	\end{equation}
	Since $\alpha_k \in [0, 1]$, by induction $\x_{k + 1}$ can be written as a convex combination of elements in the set $S_{k + 1} := \{\x_0\} \cup \{\s_i\}_{0 \leq i \leq k}$. When $C = \conv(A)$ for a set $A$ of points with some common property, usually called "elementary atoms", if $x_0 \in A$ then $x_{k}$ can be written as a convex combination of $k + 1$ elements in $A$. \imbb{Note that due to Caratheodory's theorem, we can even limit the number of occurring atoms to $\min \{k,n\}+1$.} In
	the rest of the paper the primal gap at iteration $k$ is defined as $h_k=f(\x_k)-f^*$.
	
\begin{algorithm}[H]
			\caption{\texttt{Frank-Wolfe method}}
			\label{alg:FW}
			\begin{algorithmic}
				\par\vspace*{0.1cm}
				\item$\,\,\,1$\hspace*{0.1truecm} Choose a point $x_0\in \Omega$
				\item$\,\,\,2$\hspace*{0.1truecm} For $k=0,\ldots$
				\item$\,\,\,3$\hspace*{0.9truecm} If $\x_k$ satisfies some specific condition, then STOP 
				\item$\,\,\,4$\hspace*{0.9truecm} Compute $\s_k \in \LMO_{\OM}(\nabla f(x_k))$
				\item$\,\,\,5$\hspace*{0.9truecm} Set $\dd_k^{FW} =  \s_k - \x_k$ \ \ 
				\item$\,\,\,6$\hspace*{0.9truecm} Set $\x_{k+1}=\x_k+\alpha_k \dd_k^{FW}$, with $\alpha_k\in (0,1]$ a suitably chosen stepsize
				\item$\,\,\,7$\hspace*{0.1truecm} End for
				\par\vspace*{0.1cm}
			\end{algorithmic}
\end{algorithm}

	\section{Examples}\label{Sec:examples}
	
	FW methods and variants are a natural choice for constrained optimization on convex sets admitting a linear minimization oracle significantly faster than computing a projection. We present here in particular the traffic assignment problem, submodular optimization, LASSO problem, matrix completion, adversarial attacks, minimum enclosing ball, SVM training, maximal clique search in graphs.
	%, and quadrature rules in RKHS.
	%Other examples include ...

	\subsection{Traffic assignment}
	
	Finding a traffic pattern satisfying the equilibrium conditions in a transportation network is a classic problem in optimization that dates back 
	to Wardrop's paper \cite{wardrop1952road}. Let $\G$ be a network with set of nodes $\irg 1n$. Let $\{D(i, j)\}_{i \neq j}$ be demand \imbb{coefficients}, modeling the amount of goods with destination $j$ and origin $i$. \imbb{For any $i,j$ with $i\neq j$ let furthermore $f_{ij}: \R \to \R$ be non-linear} cost functions, and $x_{ij}^s$ be the flow on  link $(i, j)$ with destination $s$. The traffic assignment problem can be modeled as the following non-linear \textit{multicommodity network} problem \cite{fukushima1984modified}:
	\begin{equation} \label{P:mcn}
		\min \left\{\sum_{i, j} f_{ij}\left( \sum_s x_{ij}^s\right) : \sum_{i}x^s_{i\ell} - \sum_{j} x_{\ell j}^s = D(\ell, s) \, , \mbox{ all }  \ell \neq s, \, \, x_{ij}^s \geq 0  \right\} \, . 
	\end{equation}
	Then the linearized optimization subproblem necessary to compute the FW vertex can be split in $n$ shortest paths subproblems, each of the form
	\begin{equation}
		\min \left\{\sum_{i,j}c_{ij}x_{ij}^s : \sum_{i} x_{i\ell}^s - \sum_{j} x_{\ell j}^s = D(\ell, s), \, \ell \neq s \right\}
	\end{equation}
	for a fixed $s \in \irg 1n$. A number of FW variants were proposed in the literature for efficiently handling this kind of problems (see, e.g., \cite{bertsekas2015convex,fukushima1984modified,leblanc1975efficient,weintraub1985accelerating} and references therein for further details). In the more recent work \cite{joulin2014efficient} a FW variant also solving a shortest path subproblem at each iteration was applied to image and video co-localization. 
	
	\subsection{Submodular optimization}
	
	Given a finite set $V$, a function $r: 2^V \rightarrow \mathbb{R}$ is said to be submodular if for every $A, B \subset V$
	\begin{equation}
	r(A) + r(B) \geq r(A \cup B) + r(A \cap B) \, .
	\end{equation}
	As is common practice in the optimization literature (see e.g. \cite[Section 2.1]{bach2013learning}), here we always assume $s(\emptyset) = 0$.
	A number of machine learning problems, including image segmentation and
	sensor placement, can be cast as minimization of a submodular function (see, e.g., \cite{bach2013learning,chakrabarty2014provable} and references therein for further details):
	\begin{equation} \label{subopt}
		\min_{A \subseteq V} r(A)\, .
	\end{equation}
	Submodular 	optimization  can also be seen
	as a more general way to relate combinatorial problems
	to convexity, %such as 
	for example for structured sparsity~\cite{bach2013learning,jaggi2013revisiting}. By a theorem from \cite{fujishige1980lexicographically}, problem \eqref{subopt} can be in turn reduced to an minimum norm point problem over the base polytope 
	\begin{equation}
		B(G) = \{\s \in \R^V  :   \sum_{a \in A} s_a \leq r(A) \mbox{ for all }  A \subseteq V\, , \,  \sum_{a \in V} s_a = r(V) \} \, .
	\end{equation}
	For this polytope, linear optimization can be achieved with a simple greedy algorithm. More precisely, consider the LP $$\max_{\s \in B(F)} \w\T\s \, .$$ Then if the objective vecor $\w$ has a negative component, the problem is clearly unbounded. Otherwise, a solution to the LP  can be obtained by ordering $\w$ in decreasing manner as $w_{j_1} \geq w_{j_2} \geq ... \geq w_{j_n}$, and setting
	\begin{equation}
		s_{j_k} := r(\{j_{1}, ..., j_{k} \}) - r(\{j_1, ..., j_{k - 1}\}) \, ,
	\end{equation}
	for $k \in \irg 1n$. We thus have a LMO with a $\mathcal{O}(n\log n)$ cost. This is the reason why FW variants are widely used in the context of submodular optimization; further details can be found in, e.g.,  \cite{bach2013learning,jaggi2013revisiting}. 
	
	\subsection{LASSO problem}
	The LASSO, proposed by Tibshirani in 1996 \cite{tibshirani1996regression}, is a popular tool for sparse linear regression. 
	Given the training set
	$$
	T=\{(\rr_i,b_i) \in \R^n\times\R :  i\in \irg 1m\}\, ,
	$$
	where $\rr_i\T$ are the rows of an $m\times n$
matrix $\Ab$,	the goal is  finding a sparse linear model (i.e., a model with a small number of non-zero parameters) describing the data. This problem is strictly 
	connected with the Basis Pursuit Denoising \imbb{(BPD)} problem in signal analysis (see, e.g., \cite{chen2001atomic}). In this case,
	given a discrete-time input signal $b$, and a \textit{dictionary}
	$$\{\av_j\in \R^m \ : \ j\in  \irg 1n \}$$
	of elementary discrete-time signals, usually called atoms (here $\av_j$ are the columns of a matrix $\Ab$), the 
	goal is finding a sparse linear combination of the atoms that {\em approximate} the real signal. From a purely formal point of view, LASSO and BPD problems are equivalent, and both can be formulated as follows:
	\begin{equation}
		\begin{array}{ll}
			\displaystyle{\min_{\x \in \R^n}}&f(\x):=\|\Ab\x-\bb\|_2^2\\
			s.t. & \|\x\|_1\leq \tau\, ,
		\end{array}
	\end{equation}
	where the parameter $\tau$ controls the amount of shrinkage that is
	applied to the model (\imbb{related to sparsity, i.e., the} number of nonzero components in $\x$). The feasible set is
	$$C=\{\x \in \R^n :   \|\x\|_1\leq \tau \}=\conv \{\pm \tau \e_i : \ i\in \irg 1n \}\, .$$
	Thus we have the following LMO in this case:
	$$\LMO_C(\nabla f(\x_k))=\sign(-\nabla_{i_k} f(\x_k))\cdot \tau \e_{i_k}\, ,$$ 
	with $i_k  \in \displaystyle\argmax_{i} |\nabla_i f(\x_k)|$. It is easy to see that the FW per-iteration cost is then $\mathcal{O}(n)$. The peculiar structure of the problem makes FW variants well-suited for its solution. This is the reason why LASSO/BPD problems were considered in a number of FW-related papers (see, e.g., \cite{jaggi2011sparse,jaggi2013revisiting,lacoste2015global,locatello2017unified}).

	\subsection{Matrix completion}		\label{s:mcp}
	Matrix completion is a widely studied problem that comes up in many areas of science and engineering, including collaborative filtering, machine learning, control, remote sensing, and computer vision (just to name a few; see also \cite{candes2009exact} and references therein). The goal is to retrieve a low rank matrix $\Xb \in \R^{n_1 \times n_2}$ from a sparse set of observed matrix entries $\{U_{ij}\}_{(i,j) \in J}$ with $J \subset \irg 1{n_1} \times \irg 1{n_2}$. Thus the problem can be formulated as follows \cite{freund2017extended}:
	\begin{equation} \label{eq:mc}
		\begin{array}{ll}
			\displaystyle{\min_{\Xb \in \R^{n_1 \times n_2}}}&f(\Xb) := \displaystyle\sum_{(i,j)\in J} (X_{ij} - U_{ij})^2\\
			\quad s.t. & \rank(\Xb)\leq \delta,
		\end{array}
	\end{equation}
	where the function $f$ is given by the squared loss over the observed entries of the matrix and $\delta>0$ is a parameter representing the assumed belief about the rank of the reconstructed matrix we want to get in the end.	In practice, the low rank constraint is relaxed with a nuclear norm ball constraint, where we recall that the 
	\imbb{nuclear norm  ${\n{\Xb}}_*$ of a matrix $\Xb$} is equal the sum of its singular values. Thus we get the following convex optimization problem:
	\begin{equation}
		\begin{array}{ll}
			\displaystyle{\min_{\Xb \in \R^{n_1 \times n_2}}}& \displaystyle\sum_{(i,j)\in J}(X_{ij} - U_{ij})^2\\
			\quad s.t. & {\n{\Xb}}_*\leq \delta\, .
		\end{array}
	\end{equation}
	The feasible set is the convex hull of rank-one matrices:
	\begin{equation*}
	    \begin{array}{rcl}
C &= &\{\Xb \in \R^{n_1 \times n_2} :   {\|\Xb\|}_*\leq \delta \}\\[0.3em]
&=& \conv \{\delta \uu\vv\T :\uu\in\R^{n_1},\vv\in\R^{n_2},\  \|\uu\|=  \|\vv\|=1 \} 	\, .\end{array}\end{equation*}
	If we indicate with $\Ab_J$ the matrix that coincides with $\Ab$ on the indices $J$ and is zero otherwise, then we can write $\nabla f(\Xb)=\imbb{2}\,(\Xb-\Ub)_J$. Thus we have the following LMO in this case:   
	\begin{equation}
		\LMO_C(\nabla f(\Xb_k)) \in \argmin \{\tr(\nabla f(\Xb_k)\T \Xb) : {\n{\Xb}}_* \leq \delta\}\, ,
	\end{equation}
	which  boils down to computing the gradient, and the  rank-one matrix $\delta \uu_1 \vv_1\T$, with $\uu_1, \vv_1$ right and left singular vectors corresponding to the top singular value of $-\nabla f(\Xb_k)$.  Consequently, the FW method at a given iteration approximately reconstructs the target matrix as a sparse combination of rank-1 matrices. Furthermore, as the gradient matrix is sparse (it  only has $|J|$  non-zero entries)  storage and approximate singular vector computations can be performed much more efficiently than for dense matrices\footnote{Details related to the LMO cost can be found in, e.g., \cite{jaggi2013revisiting}.}. A number of FW variants has hence been proposed in the literature for solving this problem (see, e.g., \cite{freund2017extended,jaggi2011sparse,jaggi2013revisiting}).
	
	\subsection{Adversarial attacks in machine learning}
	
	Adversarial examples are  maliciously perturbed inputs designed to mislead a properly trained learning machine at test time. An \textit{adversarial attack} hence consists in taking a correctly classified data point $x_0$ and slightly modifying it to create a new data point that leads the considered model to misclassification (see, e.g., \cite{carlini2017towards,chen2017zoo,goodfellow2014generative} for further details). A possible formulation of the problem (see, e.g., \cite{chen2020frank,goodfellow2014generative}) is given by the so called \textit{maximum allowable $\ell_p$-norm attack}  that is,
	\begin{equation}\label{prob_bb_attack}
		\begin{split}
			& \min_{\x\in \R^n} \, f(\x_0+\x) \\
			& s.t. \quad {\|\x\|}_p \le \varepsilon \, ,
		\end{split}
	\end{equation}
	where $f$ is a suitably chosen attack loss function, $\x_0$ is a correctly classified data point, $\x$ represents the additive noise/perturbation, $\varepsilon > 0$ denotes the magnitude of the attack, and $p\geq 1$. It is easy to see that the LMO has a cost $\mathcal{O}(n)$. If $\x_0$ is
	a feature vector of a dog image correctly classified by our learning machine, our adversarial attack hence suitably perturbs the feature vector (using the noise vector $\x$),  thus getting a new feature vector $\x_0+\x$ 
	classified, e.g., as a cat. In case a target adversarial class is specified by the attacker, we have a \emph{targeted attack}.
	In some scenarios, the goal may not be to push $\x_0$ to a specific target class, but rather push
	it away from its original class. In this case we have a so called \emph{untargeted attack}. The attack function $f$ will hence be chosen depending on the kind of attack we aim to perform over the considered model. Due to its specific structure, problem \eqref{prob_bb_attack} can be nicely handled by means of tailored FW variants.  Some FW  frameworks for adversarial attacks were recently described in, e.g.,  \cite{chen2020frank,kazemi2021generating,sahu2020decentralized}.

	\subsection{Minimum enclosing ball}
	Given a set of points $P = \{\p_1,\ldots, \p_n\}\subset\R^d$, the minimum enclosing ball problem (MEB, see, e.g.,  \cite{clarkson2010coresets,yildirim2008two}) consists in finding the \imbb{smallest ball containing $P$.} Such a problem models numerous important applications in clustering, nearest
	neighbor search, data classification, machine learning, facility
	location, collision detection, and computer graphics, \imbb{to name just a few}.
	%and military operations. 
	We refer the reader to \cite{kumar2003approximate}  and the references therein for further details. \imbb{Denoting by $\cc\in \R^d$ the center and by $\sqrt{\gamma}$ (with $\gamma\ge 0$) the radius of the ball, a} convex quadratic formulation for this problem is
	\begin{alignat}{2}
		&\min_{(\cc,\gamma) \in \R^d\times\R} && \, \gamma \\
		&\quad\ s.t.  && \,  \n{\p_i - \cc}^2 \leq \gamma \, , \; \tx{ all } i \in \irg 1n\, .
	\end{alignat}
	This problem can be formulated via \imbb{Lagrangian} duality as \imbb{a convex {\em Standard Quadratic Optimization Problem} (StQP, see, e.g.~\cite{BomdeK2002})}
	\begin{equation} \label{eq:MEBd}
	\imbb{	\min\left\{\x^{\intercal}\Ab^{\intercal}\Ab\x - \bb\T\x :  \x \in \Delta_{n - 1} \right\} }
	\end{equation}
	with $\Ab = [\p_1, ..., \p_n]$ and $\bb\T = [\p_1^{\intercal}\p_1, \ldots , \p_n^{\intercal}\p_n]$. The feasible set is \imbb{ the standard simplex}
	$$\Delta_{n - 1}:=\{\x \in \R^n_+ :  \e^\intercal \x=1\}=\conv\{\e_i : i\in \irg 1n \}\, ,$$
	and the LMO is defined as follows: 
	$$\LMO_{\Delta_{n - 1}}(\nabla f(\x_k))=\e_{i_k},$$  
	with $i_k \in \argmin_{i} \nabla_i f(\x_k)$. 
	% why min argmin? in the original definition of LMO we use \in and don't specify a minimizer
	It is easy to see that cost  per iteration is $\mathcal{O}(n)$. When applied to \eqref{eq:MEBd}, the FW method can find an $\varepsilon$-cluster in ${\mathcal O}(\frac{1}{\varepsilon})$,  where an $\varepsilon$-cluster is a subset $P'$ of $P$ such that the MEB of $P'$ dilated by $1 + \varepsilon$ contains \imbb{$P$} \cite{clarkson2010coresets}. The set $P'$ is given by the atoms in $P$ selected by the LMO in the first ${\mathcal O}(\frac{1}{\varepsilon})$ iterations. Further details related to the connections between FW methods and MEB problems can be found in, e.g., \cite{ahipacsaouglu2013modified,clarkson2010coresets,damla2008linear} and references therein.
	
	\subsection{Training linear Support Vector Machines}
	{\em Support Vector Machines  (SVMs)} represent a very important class of machine learning tools (see, e.g., \cite{vapnik2013nature} for further details). Given a labeled set of data points, usually called \emph{training set}:  $$TS=\{(\p_i, y_i),\ \p_i \in \R^d,\ y_i \in \{-1, 1\},\  i=1,\dots,n \},$$  the linear SVM training problem consists in finding a linear classifier $\w \in \R^d$ such that the label $y_i$ can be deduced with the "highest possible confidence" from $\w^{\intercal}\p_i$. A convex quadratic formulation for this problem is the following \cite{clarkson2010coresets}:
	
	\begin{equation}\label{p:svm}
		\begin{array}{cl}
			\displaystyle{\min_{\w\in \R^d, \rho \in \R}}&\rho + \frac{\n{\w}^2}{2}\\
			s.t. & \rho + y_i\, \ps{\w}{\p_i} \geq 0\, , \quad \tx{all }i\in \irg 1n\, ,
		\end{array}
	\end{equation}
	where the slack variable $\rho$ stands for the negative margin and we can have $\rho < 0$ if and only if there exists an exact linear classifier, i.e. $\w$ such that $\ps{\w}{\p_i} = \sign(y_i)$. The dual of \eqref{p:svm} is \imbb{again an StQP}:
	\begin{equation} \label{P:SVM}
	%	\min_{\x \in \Delta_{n - 1}} \x^{\intercal}\Ab^{\intercal}\Ab\x
	\imbb{	\min\left\{\x^{\intercal}\Ab^{\intercal}\Ab\x  :  \x \in \Delta_{n - 1} \right\} } 	\end{equation}
	with $\Ab = [y_1\p_1, ..., y_n\p_n]$. Notice that problem \eqref{P:SVM} is equivalent to an MNP problem on \imbb{$ \conv\{ y_i\p_i : i\in \irg 1n\}$, see Section~\ref{mnpsec} below}. Further details on FW methods for SVM training problems can be found in, e.g., \cite{clarkson2010coresets,jaggi2011sparse}. 
	
	\subsection{Finding maximal cliques in graphs} 
	In the context of network analysis the clique model, dating back at least to the work of Luce and Perry \cite{luce1949method} about social networks, refers to subsets with every two elements in a direct relationship. The problem of finding maximal cliques has numerous applications in domains including telecommunication networks, biochemistry, financial networks, and scheduling (see, e.g., \cite{bomze1999maximum,wu2015review}). Let $G=(V,E)$ be a simple undirected graph with $V$ and $E$ set of vertices and edges, respectively. A clique in $G$ is a subset $C\subseteq V$ such that $(i,j)\in E$ for each  $(i,j)\in C$, with $i\neq j$. The goal in finding a clique  $C$ such that $|C|$ is maximal (i.e., it is not contained in any strictly larger clique). This corresponds to find a local minimum for the  following equivalent 
	\imbb{(this time non-convex) StQP}
	%continuous problem 
	(see, e.g., 
	\cite{bomze1997evolution,bomze1999maximum,hungerford2019general} for further details):
	\begin{equation}\label{eq:MaxClique}
		\max\left\{ \x^{\intercal}\Ab_G \x+\frac{1}{2}\|\x\|^2 :  \x \in \Delta_{n - 1} \right\} 	\end{equation}
	where $\Ab_G$ is the adjacency matrix of $G$. Due to the peculiar structure of the problem, FW methods can be fruitfully used to find maximal cliques (see, e.g., \cite{hungerford2019general}). 

	\section{Stepsizes} \label{s:stp}
	Popular rules for determining the stepsize are:
	
	\begin{itemize}
		\item \emph{diminishing stepsize}: 
		\begin{equation} \label{alpha2k2}
			\alpha_k = \frac{2}{ k + 2} \, ,
		\end{equation}
		mainly used for the classic FW (see, e.g., \cite{freund2016new,jaggi2013revisiting});
		\item  \emph{exact line search}: 
		\begin{equation}
			\alpha_k = \imbb\min \imbb{\argmin_{\alpha \in [0, \alpha_k^{\max}]} \varphi(\alpha)} \quad\imbb{\mbox{with }\varphi (\alpha):=f(\x_k + \alpha \, \dd_k) }\, ,
		\end{equation}
		where we pick the \imbb{smallest} minimizer of the function $\varphi$ \imbb{for the sake of being well-defined even in rare cases of ties} 	(see, e.g., \cite{bomze2020active,lacoste2015global});
		\item 	  \emph{Armijo line search:} 
		the method iteratively shrinks the step size in order to guarantee a sufficient reduction of the objective function. It represents a good way to replace exact line search in cases when it gets too costly. In practice, 
		we fix parameters $\delta\in (0,1)$  and $\gamma\in(0,\frac 12)$,   then try steps $\alpha=\delta^{m}\alpha_k^{\max}$ 
		with $m\in \{ 0,1,2,\dots\}$ until the sufficient decrease inequality 
		\begin{equation}\label{sdcond}
			f(\x_k+\alpha \,\dd_k)\leq f(\x_k)+\gamma \alpha\, \nabla f(\x_k)^{\intercal}  \dd_k
		\end{equation}
		holds, and set $\alpha_k=\alpha$ (see, e.g., \cite{bomze2019first} and references therein).
		\item 	  \emph{Lipschitz constant dependent step size:} 
		\begin{equation} \label{eq:L}
			\alpha_k = \alpha_k(L) := \min \left\{ -\, \frac{ \nabla f(\x_k)\T\dd_k}{L\n{\dd_k}^2}, \alpha_k^{max} \right\} \, ,
		\end{equation}
		with $L$  the Lipschitz constant of $\nabla f$ (see, e.g., \cite{bomze2020active,pedregosa2020linearly}).
	\end{itemize}
	
	The Lipschitz constant dependent step size can be seen as the minim\imbb{izer} of the quadratic model \imbb{$m_k(\cdot; L)$ overestimating $f$ along the line $\x_k+\alpha\, \dd_k$:}
	\begin{equation} \label{eq:mg}
		m_k( \alpha ; L) = f(\x_k) + \alpha\, \ps{\nabla f(\x_k)}{\dd_k} + \frac{L\alpha^2}2\,  \n{\dd_k}^2 \geq f(\x_k + \alpha\, \dd_k)\, , 
	\end{equation}
	where the inequality follows by the standard Descent Lemma \cite[Proposition 6.1.2]{bertsekas2015convex}.  
	
\imbb{In case $L$ is unknown, it is even  possible  to approximate $L$} using a backtracking line search (see, e.g., \cite{kerdreux2020affine,pedregosa2020linearly}). 

%This procedure consists in defining a sequence $\{L_k\}$ of estimates for $L$, %\imbb{satisfying} $L_{k} \in [\nu L_{k - 1}, L_{k - 1}]$ for some $\nu \imbb{<} 1$ %\imbb{ starting with some $L_0$ large enough}. Then $L_k$ is increased by a %constant factor $\tau > 1$ until \eqref{eq:mg} is satisfied for $L = L_k$ and %$\alpha = \alpha_k(L_k)$. As a trivial consequence to \eqref{eq:mg}, $\{L_k\}$ is %uniformly upper bounded by $\tau L$.     

We now report a lower bound for the improvement on the objective obtained with the stepsize \eqref{eq:L}, often used in the convergence analysis.
	\begin{lemma} \label{l:am1}
		If $\alpha_k$ is given by \eqref{eq:L} and $\alpha_k < \alpha_k^{\max}$ 
		then 
		\begin{equation} \label{eq:cr}
			f(\x_{k + 1}) \leq f(\x_k) - \frac{1}{2L}(\ps{\nabla f(\x_k)}{\widehat{\dd}_k})^2  \, .
		\end{equation}
	\end{lemma}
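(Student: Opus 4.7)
The plan is to directly exploit the fact that the stepsize in \eqref{eq:L} was designed precisely as the unconstrained minimizer of the quadratic model $m_k(\cdot;L)$ from \eqref{eq:mg}. The assumption $\alpha_k < \alpha_k^{\max}$ means the minimum in \eqref{eq:L} is attained by the first argument, so
\[
\alpha_k = -\,\frac{\nabla f(\x_k)\T \dd_k}{L\n{\dd_k}^2}\, .
\]
In particular $\alpha_k>0$ requires $\nabla f(\x_k)\T \dd_k<0$, i.e.\ $\dd_k$ is a descent direction. First I would plug this value into the quadratic upper bound \eqref{eq:mg}, which is valid by the descent lemma and yields $f(\x_{k+1}) \leq m_k(\alpha_k;L)$.

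Next I would carry out the elementary substitution: the linear term contributes $-(\nabla f(\x_k)\T \dd_k)^2/(L\n{\dd_k}^2)$ while the quadratic term contributes exactly half of the same quantity in absolute value, namely $+(\nabla f(\x_k)\T \dd_k)^2/(2L\n{\dd_k}^2)$. These combine to
\[
m_k(\alpha_k;L) = f(\x_k) - \frac{(\nabla f(\x_k)\T \dd_k)^2}{2L\n{\dd_k}^2}\, .
\]
Finally, rewriting the fraction using the normalization $\widehat{\dd}_k = \dd_k/\n{\dd_k}$ gives $(\nabla f(\x_k)\T \dd_k)^2/\n{\dd_k}^2 = (\ps{\nabla f(\x_k)}{\widehat{\dd}_k})^2$, which produces precisely \eqref{eq:cr}.

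There is no substantive obstacle here: the lemma is essentially bookkeeping built on two ingredients that are already in place, namely the explicit form of $\alpha_k$ (available because the upper cap is inactive by assumption) and the descent lemma inequality $f(\x_k+\alpha\,\dd_k)\leq m_k(\alpha;L)$. The only point worth spelling out carefully is that the case $\dd_k = \oo$ cannot arise under the hypothesis, since then $\alpha_k$ in \eqref{eq:L} would not be well defined, and that the convention $\widehat{\oo} = \oo$ is therefore not invoked.
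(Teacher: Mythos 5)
Your proof is correct and follows exactly the paper's argument: apply the descent-lemma quadratic upper bound, substitute the unconstrained minimizer value of $\alpha_k$ (valid since the cap is inactive), and simplify to obtain \eqref{eq:cr}. The added remark about $\dd_k=\oo$ is a harmless extra precaution not present in the paper.
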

	\begin{proof}
		We have
		\begin{equation}
			\begin{array}{rcl}
				 %f(\x_{k + 1}) &= &
				 f(\x_k + \alpha_k \, \dd_k) &\leq &f(\x_k) + \alpha_k \ps{\nabla f(\x_k)}{\dd_k} + \frac{L\alpha_k^2}{2}\, \n{\dd_k}^2 \\[0.3em]
				&= &f(\x_k) - \frac{(\ps{\nabla f(\x_k)}{\dd_k})^2}{2L\n{\dd_k}^2} =   f(\x_k) - \frac{1}{2L}(\ps{\nabla f(\x_k)}{\widehat{\dd_k}})^2 \, ,
			\end{array}
		\end{equation}
		where we used the standard Descent Lemma in the inequality. \qed
	\end{proof}
	\section{Properties of the FW method and its variants} \label{s:cFW}
	\subsection{The FW gap}
	A key parameter often used as a measure of convergence is the FW gap 
	\begin{equation}\label{gapfun}
		G(\x) = \max_{\s \in \OM} - \ps{\nabla f(\x)}{(\s - \x)} %=  \imbb{-} \ps{\nabla f(\x_k)}{\dd_k^{FW}} 
		\, ,
	\end{equation}
	which is always nonnegative and equal to $0$ only in first order stationary points. This gap  is, by definition, readily available during the algorithm.  	
If $f$ is convex, using that $\nabla f(\x)$ is a subgradient we obtain
	\begin{equation}
		G(\x) \geq -  \ps{\nabla f(\x)}{(\x^* - \x)} \geq f(\x) - f^*\, ,
	\end{equation}
	so that $G(\x)$ is an upper bound on the optimality gap at $\x$. Furthermore, $G(\x)$ is a special case of the Fenchel duality gap \cite{lacoste2013block}.
	
	If $\OM=\Delta_{n - 1}$ is the simplex, then $G$ is related to the Wolfe dual \imbb{as defined in \cite{clarkson2010coresets}. 
	Indeed, {this variant of} Wolfe's dual reads}
	\begin{equation}\label{p:wd}
		\begin{aligned}
			\max \ & f(\x) + \lambda (\e^{\intercal}\x-1)- \ps{\uu}{\x} 	\\		
	\tx{s.t.}\;\;		& \nabla_i f(\x) - u_i +  {\lambda} = 0 \, ,\quad i \in \irg 1n \, , \\
			& (\x, \uu, {\lambda}) \in \R^n \times \R^n_+ \times \R \ 
		\end{aligned}
	\end{equation}
	and %it is easy to see that 
	for a fixed $\x\in \R^n$, the optimal values of $(\uu,  {\lambda})$ are 
	$$ {\lambda}_\x = - \min_{j} \nabla_j f(\x)\, , \, \ \ u_i(\x):=   \nabla_i f(\x) - \min_j  \nabla_j f(\x) \ge 0 \, .$$ 
Performing maximization in problem~\eqref{p:wd} iteratively, first for $(\uu,\lambda)$ and then for $\x$, this implies that \eqref{p:wd} is equivalent to
	\begin{equation}\begin{array}{rcl}
&&\max_{\x \in \R^n}\left [ f(\x) + \lambda_\x (\e\T\x-1) - \uu(\x)\T\x\right ]\\[0.3em] & = & \max_{\x \in \R^n}\left [ f(\x)- \max_j (\e_j-\x)\T\nabla f(\x) \right ] 
=	\max_{\x \in \R^n}\left [ f(\x) - G(\x)\right ] \, .\end{array}
	\end{equation}
	Furthermore, since Slater's condition is satisfied, strong duality holds by Slater’s theorem \cite{boyd2004convex}, \imbb{resulting in} $ G(\x^*) = 0$ for every solution $\x^*$ of the primal problem. 
	
	The FW gap is related to several other measures of convergence (see e.g. \cite[Section 7.5.1]{lan2020first}). First, consider the projected gradient
	\begin{equation}
		\widetilde{\g}_k := \pi_\OM(\x_k - \nabla f(\x_k)) - \x_k \, .
	\end{equation}
	with $\pi_{B}$ the projection on a convex and closed subset $B\subseteq\R^n$. We have $\n{\widetilde{\g}_k} = 0$ if and only if $\x_k$ is stationary, with
	\begin{equation}
		\begin{array}{rcl}
			\n{\widetilde{\g}_k}^2 & = & \ps{\widetilde{\g}_k}{\widetilde{\g}_k} \,\leq\, \ps{\widetilde{\g}_k}{[(\x_k - \nabla f(\x_k) ) - \pi_\OM(\x_k - \nabla f(\x_k))]} + \ps{\widetilde{\g}_k}{\widetilde{\g}_k} \\[0.4em]
			&= &   -\ps{\widetilde{\g}_k}{\nabla f(\x_k)} \, = \, -\ps{(\pi_\OM(\x_k - \nabla f(\x_k)) - \x_k) }{\nabla f(\x_k)} \\[0.4em]
			&\leq & \max\limits_{\y \in \OM} - \ps{(\y - \x_k)}{\nabla f(\x_k)} \,=\, G(\x_k) \, ,	
		\end{array}
	\end{equation}
	where we used $\ps{[\y - \pi_{\OM}(\x)]}{[\x - \pi_{\OM}(\x)]} \leq 0$ in the first inequality, with $\x = \x_k - \nabla f(\x_k)$ and $\y = \x_k$. \\
	Let now $N_{\OM}(x)$ denote the normal cone to $\OM$ at a point $\x \in \OM$:
	\begin{equation}
		N_{\OM}(\x) := \{\rr \in \R^n : \ps{\rr}{(\y - \x)} \leq 0 \; \mbox{ for all } \y \in \OM \} \, .
	\end{equation}
	First-order stationarity conditions are equivalent to $ - \nabla f(\x) \in N_{\OM}(\x)$, or
	$$\dist(N_{\OM}(\x), - \nabla f(\x)) = \n{- \nabla f(\x) - \pi_{N_{\OM}(\x)}(-\nabla f(\x)) } = 0 \, .$$
	The FW gap provides a lower bound on the distance from the normal cone $\dist(N_{\OM}(\x), - \nabla f(\x))$, \imbb{inflated by the diameter $D>0$ of $\OM$, as follows}:
	\begin{equation}
		\begin{array}{rcl}
			G(\x_k) &= & -\ps{(\s_k - \x_k)}{\nabla f(\x_k)} \\[0.4em]
		&= & \ps{(\s_k - \x_k)}{[\pi_{N_{\OM}(\x_k)}(-\nabla f(\x_k)) - (\pi_{N_{\OM}(\x_k)}(-\nabla f(\x_k)) + \nabla f(\x_k))]} \\[0.4em] 
			&\leq & \n{\s_k - \x_k}\, \n{\pi_{N_{\OM}(\x_k)}(-\nabla f(\x_k)) + \nabla f(\x_k)} \\[0.4em]
			&\leq & D\, \dist(N_{\OM}(\x_k), -\nabla f(\x_k)) \, ,		
		\end{array}
	\end{equation}
	where in the first inequality we used $\ps{(\s_k - \x_k)}{[\pi_{N_{\OM}(\x_k)}(-\nabla f(\x_k))]} \leq 0$ together with the Cauchy-Schwarz inequality, and $\n{\s_k - \x_k} \leq D$ in the second.
	
	\subsection{${\mathcal O}(1/k)$ rate for convex objectives}
	
If $f$ is non-convex, it is possible to prove a ${\mathcal O}(1/\sqrt{k})$ rate for $\min_{i \in [1:k]} G(x_i)$ (see, e.g., \cite{lacoste2016convergence}). On the other hand, if $f$ is convex, we have an ${\mathcal O}(1/k)$ rate on the optimality gap (see, e.g., \cite{frank1956algorithm,levitin1966constrained}) for all the stepsizes discussed in Section~\ref{s:stp}. Here we include a proof for the Lipschitz constant dependent stepsize $\alpha_k$ given by \eqref{eq:L}.
	\begin{theorem} \label{t:1/k}
		If $f$ is convex and the stepsize is given by \eqref{eq:L}, then 
for every $k \geq 1$		
\begin{equation}\label{thm1cond}
			f(\x_k) - f^* \leq \frac{2LD^2}{k + 2}  \, .
		\end{equation}
	\end{theorem}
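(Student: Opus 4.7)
The strategy is the classical Frank-Wolfe inductive argument based on the quadratic upper model $m_k(\alpha;L)$ introduced in \eqref{eq:mg}. The key observation is that the Lipschitz-dependent stepsize \eqref{eq:L} is, by construction, the minimizer of $m_k(\cdot;L)$ on $[0,\alpha_k^{\max}]=[0,1]$. Hence for every admissible $\alpha\in[0,1]$ one has $f(\x_{k+1})\le m_k(\alpha_k;L)\le m_k(\alpha;L)$, which gives the degree of freedom needed to close the induction.

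First I would combine three ingredients at iteration $k$: (i) the descent-lemma inequality $f(\x_{k+1})\le f(\x_k)+\alpha\,\nabla f(\x_k)\T\dd_k+\frac{L\alpha^2}{2}\n{\dd_k}^2$ valid for all $\alpha\in[0,1]$ (using the above minimality of $\alpha_k$), (ii) the FW identity $-\nabla f(\x_k)\T\dd_k^{FW}=G(\x_k)$, and (iii) convexity, which by the discussion of the FW gap yields $G(\x_k)\ge h_k$. Together with $\n{\dd_k}\le D$, subtracting $f^*$ from both sides produces the master recursion
\begin{equation*}
h_{k+1}\;\le\;(1-\alpha)\,h_k+\tfrac{L D^2}{2}\,\alpha^2 \qquad \text{for every }\alpha\in[0,1].
\end{equation*}

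Next I would establish \eqref{thm1cond} by induction on $k\ge 1$. For the base case $k=1$, choose $\alpha=1$ in the master recursion to obtain $h_1\le \tfrac12 L D^2\le \tfrac{2LD^2}{3}$, matching the claim. For the inductive step, assume $h_k\le \tfrac{2LD^2}{k+2}$ and plug the specific choice $\alpha=\tfrac{2}{k+2}\in(0,1]$ into the master recursion; a short calculation gives
\begin{equation*}
h_{k+1}\;\le\;\frac{k}{k+2}\cdot\frac{2LD^2}{k+2}+\frac{2LD^2}{(k+2)^2}\;=\;\frac{2LD^2(k+1)}{(k+2)^2}\;\le\;\frac{2LD^2}{k+3},
\end{equation*}
where the last inequality follows from $(k+1)(k+3)\le(k+2)^2$.

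There is no serious obstacle here; the only subtle point is to justify using $m_k(\alpha;L)$ at a value of $\alpha$ different from $\alpha_k$, and this is guaranteed precisely because \eqref{eq:L} is the argmin of the convex quadratic $m_k(\cdot;L)$ on the feasible interval $[0,1]$. Note that the argument never enters the two cases $\alpha_k<1$ vs.\ $\alpha_k=1$ separately, since the ``comparison'' $\alpha=2/(k+2)$ is admissible regardless.
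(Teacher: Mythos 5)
Your proof is correct, and it takes a genuinely different route from the paper's. The fact you exploit---that the stepsize \eqref{eq:L} is precisely the minimizer of the quadratic upper model $m_k(\cdot;L)$ over $[0,1]$ (valid here because $\ps{\nabla f(\x_k)}{\dd_k^{FW}}=-G(\x_k)\le 0$, so the unconstrained minimizer is nonnegative and the constrained minimizer is its truncation at $\alpha_k^{\max}=1$)---lets you compare against the oblivious stepsize $\alpha=2/(k+2)$ and run a single uniform induction on the recursion $h_{k+1}\le(1-\alpha)h_k+\frac{LD^2}{2}\alpha^2$. The paper instead analyzes the decrease actually realized by $\alpha_k$, splitting into the cases $\alpha_k=1$ (full FW step, handled by Lemma~\ref{l:g2}, which yields the halving $h_{k+1}\le h_k/2$) and $\alpha_k<1$ (handled by Lemma~\ref{l:am1}, which yields the quadratic recursion $h_{k+1}\le h_k-h_k^2/(2LD^2)$); the induction is then closed by the same elementary inequality $(k+1)(k+3)\le(k+2)^2$ that you use. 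Your route buys a cleaner argument with no case distinction, and it transfers verbatim to exact line search (which likewise dominates every fixed $\alpha\in[0,1]$); the paper's route produces the sharper per-step estimates of Lemmas~\ref{l:am1} and~\ref{l:g2} as standalone tools, which are reused later (e.g.\ in the linear convergence analysis of Lemma~\ref{l:ani}). The only point you should make explicit is the sign condition $\ps{\nabla f(\x_k)}{\dd_k^{FW}}\le 0$ justifying the argmin claim, but this is immediate from the definition of the FW gap.
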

	Before proving the theorem we prove a lemma concerning the decrease of the objective in the case of a full FW step, that is a step with $\dd_k = \dd_k^{FW}$ and with $\alpha_k$ equal to $1$, the maximal feasible stepsize. 
	\begin{lemma}\label{l:g2}
		If $\alpha_k = 1$ and $\dd_k = \dd_k^{FW}$ then 
		\begin{equation} \label{eq:lg2}
			f(\x_{k + 1}) - f^* \leq \frac 12 \, \min \left\{ L\n{\dd_k}^2 ,   f(\x_k) - f^*  \right\} \, . 		
		\end{equation}	
	\end{lemma}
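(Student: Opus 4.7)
The plan is to start from the Descent Lemma applied at $\alpha=1$, which via the quadratic upper model~\eqref{eq:mg} gives
\begin{equation*}
f(\x_{k+1}) \le f(\x_k) + \ps{\nabla f(\x_k)}{\dd_k} + \tfrac{L}{2}\n{\dd_k}^2\, .
\end{equation*}
Alongside this I would record the standard FW/convexity inequality: since $\s_k \in \LMO_{\OM}(\nabla f(\x_k))$ and $\x^* \in \OM$, convexity of $f$ yields
\begin{equation*}
\ps{\nabla f(\x_k)}{\dd_k}=\ps{\nabla f(\x_k)}{\s_k-\x_k}\le \ps{\nabla f(\x_k)}{\x^*-\x_k} \le f^* - f(\x_k)\, .
\end{equation*}
Substituting this directly into the descent bound collapses the linear term against the primal gap and leaves $f(\x_{k+1})-f^* \le \tfrac{L}{2}\n{\dd_k}^2$, which is the first half of the $\min$.

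For the other half I would invoke the stepsize rule~\eqref{eq:L}: the hypothesis $\alpha_k = 1 = \alpha_k^{\max}$ forces the first argument of the $\min$ in~\eqref{eq:L} to be at least $1$, i.e.\ $L\n{\dd_k}^2 \le -\ps{\nabla f(\x_k)}{\dd_k}$. Plugging this into the quadratic term of the descent bound shrinks it to at most $-\tfrac12\ps{\nabla f(\x_k)}{\dd_k}$, so
\begin{equation*}
f(\x_{k+1}) \le f(\x_k) + \ps{\nabla f(\x_k)}{\dd_k} -\tfrac{1}{2}\ps{\nabla f(\x_k)}{\dd_k}  = f(\x_k) + \tfrac{1}{2}\ps{\nabla f(\x_k)}{\dd_k}\, ,
\end{equation*}
and a second application of the FW/convexity inequality yields $f(\x_{k+1}) - f^* \le \tfrac{1}{2}(f(\x_k)-f^*)$.

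The main subtlety, and the only place the argument needs care, is recognising that the second half of the bound really does lean on~\eqref{eq:L} rather than on $\alpha_k=1$ as a bare hypothesis; a quick one-dimensional check with $f(x)=x^2$ on $[-1,2]$ and $\x_k=1$ produces a full step landing at $\x_{k+1}=-1$ with no decrease of the primal gap at all, so the $\tfrac{1}{2}(f(\x_k)-f^*)$ clause of the $\min$ cannot be deduced without exploiting that $\alpha_k=1$ is the output of~\eqref{eq:L}. Once this is noted, the lemma dovetails with Lemma~\ref{l:am1}, which handles the complementary case $\alpha_k<\alpha_k^{\max}$, and both parts of the $\min$ fall out of the same short computation.
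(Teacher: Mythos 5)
Your proof is correct and follows essentially the same route as the paper's: the Descent Lemma at $\alpha=1$ combined with the convexity bound $\ps{\nabla f(\x_k)}{\dd_k}\le f^*-f(\x_k)$ for the first clause of the $\min$, and the inequality $L\n{\dd_k}^2\le-\ps{\nabla f(\x_k)}{\dd_k}=G(\x_k)$ extracted from the stepsize rule~\eqref{eq:L} for the second. Your observation that the $\tfrac12(f(\x_k)-f^*)$ clause genuinely requires $\alpha_k=1$ to be the output of~\eqref{eq:L}, not a bare hypothesis, is accurate and matches the paper's own use of that assumption.
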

	\begin{proof}
		If $\alpha_k = 1 = \alpha_k^{\max}$ then \imbb{by Definitions~\eqref{fwdk} and \eqref{gapfun}}
		\begin{equation} \label{eq:l11}
			G(\x_k) = -\ps{\nabla f(\x_k)}{\dd_k} \geq L \n{\dd_k}^2 \, ,
		\end{equation}
	\imbb{the last inequality following by Definition~\eqref{eq:L} and the assumption that $\alpha_k = 1$.	By the standard Descent Lemma it also follows}
		\begin{equation} \label{eq:l1sd}
			f(\x_{k + 1}) - f^* = f(\x_k + \dd_k) - f^* \leq f(\x_k) - f^* + \ps{\nabla f(\x_k)}{\dd_k} + \frac{L}{2}\,\n{\dd_k}^2 \, .
		\end{equation}
		\imbb{Considering the definition of $\dd_k$  and convexity of $f$, we get 
		$$f(\x_k) - f^* + \ps{\nabla f(\x_k)}{\dd_k} \le 
		f(\x_k) - f^* + \ps{\nabla f(\x_k)}{(\x^*-\x_k)} \le
		0\, ,$$
		so that~\eqref{eq:l1sd} entails $	f(\x_{k + 1}) - f^* \le \frac{L}{2}\,\n{\dd_k}^2 
		$.}
		To conclude, it suffices to apply to the RHS \imbb{of~\eqref{eq:l1sd}} the inequality
		\begin{equation}
			%\begin{aligned}
				f(\x_k) - f^* + \ps{\nabla f(\x_k)}{\dd_k} + {\textstyle{\frac{L}{2}}}\,\n{\dd_k}^2 \leq %& 
				f(\x_k) - f^* -{\textstyle{\frac{1}{2}}}\,G(\x_k) %\\ 
				\leq %& 
				{\textstyle{\frac{f(\x_k) - f^*}{2}}}\,	
			%\end{aligned}
		\end{equation}
		where we used \eqref{eq:l11} in the first inequality and $G(\x_k) \geq f(\x_k) - f^*$ in the second. \qed
	\end{proof}

	We can now proceed with the proof of the main result. 
	\begin{proof}[Theorem \ref{t:1/k}]
		For  $k = 0$ and $\alpha_0 = 1$ then by Lemma \ref{l:g2}
		\begin{equation}
			f(\x_1) - f^* \leq \frac{L\n{\dd_0}^2}{2} \leq \frac{ L D^2}{2} \, . 
		\end{equation}
		If $\alpha_0 < 1$ then 
		\begin{equation}
			f(\x_0) - f^* \leq G(\x_0) < L\n{\dd_0}^2 \leq LD^2 \, .
		\end{equation}
		Therefore in both cases \eqref{eq:cr} holds for $k = 0$. \\
		Reasoning by induction, if~\eqref{thm1cond} holds for $k$ \imbb{with} $\alpha_k = 1$, then the claim is clear by~\eqref{eq:lg2}. On the other hand, if $\alpha_k <\alpha_k^{\max}= 1  $ then by Lemma \ref{l:am1}, we have 
		\begin{equation}
			\begin{array}{rcl}
			 f(\x_{k + 1}) - f^* &\leq  &f(\x_k) - f^* - \frac{1}{2L}\, (\ps{\nabla f(x_k)}{\widehat{\dd}_k})^2\\[0.4em]
			 &\leq &f(\x_k) - f^* - \frac{(\ps{\nabla f(\x_k)}{\dd_k})^2}{2LD^2} \\[0.4em] &\leq &f(\x_k) - f^* - \frac{(f(\x_k) - f^*)^2}{2LD^2}\\[0.4em] 
			 &= &(f(\x_k) - f^*)(1 - \frac{f(\x_k) - f^*}{2LD^2}) \, \leq \, \frac{2LD^2}{k + 3} \, ,
			\end{array}	
		\end{equation}
		where we used $\n{\dd_k} \leq D$ in the second inequality, $\ps{\nabla f(\x_k)}{\dd_k} = G(\x_k) \geq f(\x_k) - f^*$ in the third inequality; and the last inequality follows by 
		%applying the 
		induction hypothesis. \qed 
	\end{proof}
	As can be easily seen from  above argument, the convergence rate of ${\mathcal O}(1/k)$ is true also in more abstract normed spaces than $\R^n$, e.g. when $\OM$ is a convex and weakly compact subset of a Banach space (see, e.g., \cite{demianov1970approximate,dunn1978conditional}).  A generalization for some unbounded sets is given in \cite{ferreira2021frank}. The bound is tight due to a zigzagging behaviour of the method near solutions on the boundary, leading to a rate of $\Omega(1/k^{1 + \delta})$ for every $\delta > 0$ (see \cite{canon1968tight} for further details), when the objective is a strictly convex quadratic function and the domain is a polytope. \\
	Also the minimum FW gap $\min_{i \in [0: k]} G(\x_i) $ converges at a rate of ${\mathcal O}(1/k)$ (see \cite{jaggi2013revisiting,freund2016new}). In \cite{freund2016new}, a broad class of stepsizes is examined, including $\alpha_k= \frac 1{k + 1}$ and $\alpha_k = \bar{\alpha}$ constant. For these stepsizes a convergence rate of ${\mathcal O}\left(\frac{\ln(k)}{k}\right)$ is proved.
	
	\subsection{Variants} \label{s:var}
	Active set FW variants mostly aim to improve over the ${\mathcal O}(1/k)$ rate and also ensure support identification in finite time. They generate a sequence of active sets $\{A_k\}$, such that $\x_k \in \conv(A_k)$, and define alternative directions making use of these active sets. 
	
	%	\subsubsection{AFW, PFW, FDFW} \label{s:afwvar}
	For the {\em pairwise FW (PFW)} and the {\em away step 
	FW (AFW)} (see  \cite{clarkson2010coresets,lacoste2015global}) we have that $A_k$ must always be a subset of $S_k$, with $\x_k$ a convex combination of the elements in $A_k$. The away vertex $\vv_k$ is then defined by 
	\begin{equation}
		\vv_k \in \argmax_{\y \in A_k} \ps{\nabla f(\x_k)}{\y} \, .
	\end{equation}
	The AFW direction, introduced in \cite{wolfe1970convergence}, is hence given by
	\begin{equation}
		\begin{array}{ll}
			\dd^{AS}_k &= \x_k - \vv_k \\
			\dd_k   &\in \argmax\{\ps{-\nabla f(\x_k)}{\dd} : \dd \in \{\dd_k^{AS}, \dd_k^{FW}\} \} \, ,
		\end{array}
	\end{equation}
	while the PFW direction, as defined in \cite{lacoste2015global} and inspired by the early work \cite{mitchell1974finding}, is
	\begin{equation}
		\dd^{PFW}_k =\dd_k^{FW}+\dd^{AS}_k= \s_k - \vv_k \, ,
	\end{equation}
	\imbb{with $\s_k$ defined in~\eqref{fwdk}.}
	
	The {\em FW method with in-face directions (FDFW)} (see \cite{freund2017extended,guelat1986some}), also known as Decomposition invariant Conditional Gradient (DiCG) when applied to polytopes~\cite{bashiri2017decomposition}, is defined exactly as the AFW, but with the minimal face $\mathcal{F}(\x_k)$ of $\OM$ containing $\x_k$ as the active set.  
	%For the away step FW (AFW), the pairwise FW (PFW), the  
	%	\subsubsection{EFW}
	The {\em extended FW (EFW)} was introduced in \cite{holloway1974extension} and is also known as simplicial decomposition \cite{von1977simplicial}. At every iteration the method minimizes the objective in the current active set $A_{k + 1}$
	\begin{equation}
		\x_{k + 1} \in \argmin_{\y \in \conv(A_{k + 1})} f(\y)\,  ,
	\end{equation}
	where $A_{k + 1} \subseteq A_k \cup \{s_k\}$ (see, e.g., \cite{clarkson2010coresets}, Algorithm 4.2). A more general version of the EFW, only approximately minimizing on the current active set, was introduced in \cite{lacoste2015global} under the name of fully corrective FW.  In Table \ref{tab:FWvariants}, we report the main features of the classic FW and of the variants under analysis.\\
	
	\begin{table}[h]
		\begin{center}
			\def\arraystretch{1.5}
			\begin{tabular}{|l|l|c|}
				\hline
				\multicolumn{1}{|c|}{Variant} & \multicolumn{1}{|c|}{Direction}   &  \multicolumn{1}{|c|}{Active set}     \\ \hline
				FW      & $\dd_k = \dd_k^{FW} = \s_k - \x_k, \quad \s_k \in \argmax\{\ps{\nabla f(\x_k)}{\x} : \x \in \OM \}  $ & -                                       \\ \hline
				AFW     &  $	\dd_k   \in \argmax\{\ps{-\nabla f(\x_k)}{\dd} : \dd \in \{\x_k - \vv_k, \dd_k^{FW}\}, \ \vv_k \in A_k \}$ \        & $A_{k + 1}\subseteq A_k \cup \{\s_{k}\}$   \\ \hline
				PFW     &  $	\dd_k = \s_k - \vv_k, \quad
				\vv_k        \in \argmax\{\ps{\nabla f(\x_k)}{\vv_k} : \vv_k \in A_k \} $	            &             $A_{k + 1}\subseteq A_k \cup \{\s_{k}\}$                             \\ \hline
				EFW    &     		$		\dd_k = \y_k - \x_k, \quad 	\y_k       \in \argmin\{f(\y): \y \in \conv(A_k) \}           $
				&    $A_{k + 1}\subseteq A_k \cup \{\s_{k}\}$                  \\ \hline
				FDFW    &   $	\dd_k   \in \argmax\{\ps{-\nabla f(\x_k)}{\dd} : \dd \in \{\x_k - \vv_k, \dd_k^{FW}\}, \ \vv_k \in A_k \} $           & $A_k = \F(\x_k)$   \\  \hline                   
			\end{tabular}
			\caption{FW method and variants covered in this review.}\label{tab:FWvariants}
		\end{center}
	\end{table}	
	
	\subsection{Sparse approximation properties}
	As discussed in the previous section, for the classic FW method and the AFW, PFW, EFW variants $\x_k$ can always be written as a convex combination of elements in $A_k \subset S_k$, with $|A_k| \leq k + 1$. Even for the FDFW we still have the weaker property that $\x_k$ must be an affine combination of elements in $A_k \subset A$ with $ |A_k| \leq k + 1$. It turns out that the convergence rate of methods with this property is $\Omega(\frac{1}{k})$ in high dimension. More precisely, if $\OM = \conv(A)$ with $A$ compact, the ${\mathcal O}(1/k)$ rate of the classic FW method is worst case optimal given the sparsity constraint 
	\begin{equation} \label{eq:sparse}
		x_k \in \aff(A_k)  \ \text{with }A_k \subset A, \ |A_k| \leq k + 1 \, .
	\end{equation}
	An example where the ${\mathcal O}(1/k)$ rate is tight was presented in \cite{jaggi2013revisiting}. Let $\OM = \Delta_{n - 1}$ and $f(\x) = \n{\x - \frac 1n\, \e }^2$. Clearly, $f^* = 0$ with $\x^* = \frac 1n\, \e$. Then it is easy to see that $\min \{f(\x) - f^* : {\|\x\|}_0 \leq k + 1 \} \geq \frac{1}{k + 1} - \frac{1}{n}$ for every $k \in \mathbb{N}$, so that in particular under \eqref{eq:sparse} with $A_k = \{e_i  : i \in \irg 1n\}$, the rate of any FW variant must be $\Omega(\frac{1}{k})$. 
	
	\subsection{Affine invariance}
	
	The FW method and the AFW, PFW, EFW are affine invariant \cite{jaggi2013revisiting}. More precisely, let $\Pb$ be a linear transformation, $\hat{f}$ be such that $\hat{f}(\Pb\x) = f(\x)$ and $\hat{\OM} = \Pb(\OM)$. Then for every sequence $\{\x_k\}$ generated by the methods applied to $(f, \OM)$, the sequence $\{\y_k\} := \{\Pb\x_k\}$ can be generated by the FW method with the same stepsizes applied to $(\hat{f}, \hat{\OM})$. As a corollary, considering the special case where $\Pb$ is the matrix collecting the elements of $A$ as columns, one can prove results on $\OM = \Delta_{|A| - 1}$ and generalize them to $\hat{\OM}:= \conv(A)$ by affine invariance. 
	
	An affine invariant convergence rate bound for convex objectives can be given using the curvature constant
	\begin{equation}
		\kappa_{f, \OM} := 
		\sup\left\{2{\textstyle{ \frac{f( \alpha \y+(1-\alpha)\x ) - f(\x)	- \alpha \ps{\nabla f(\x)}{(\y-\x)}}{\alpha^2}}}: \{\x,\y\}\subset \OM, \,  
		\alpha \in (0, 1]
		 \right\} \, .
		%
		%\sup_{\substack{x,y \in \OM, \\ d = y - x, \\ \alpha \in (0, 1]} } \frac{2}{\alpha^2} (f(x + \alpha d) - f(x)	- \alpha \ps{\nabla f(x)}{d}) \, .
	\end{equation}
	It is easy to prove that $\kappa_{f, \OM} \leq LD^2$ if $D$ is the diameter of $\OM$. In the special case where $\OM = \Delta_{n - 1}$ and $f(\x) = \x^\intercal \tilde{\Ab}^\intercal \tilde{\Ab} \x + \bb\T\x$, then $\kappa_{f, \OM} \leq \diam(\Ab\Delta_{n - 1})^2$ for \imbb{$\Ab\T = [\tilde{\Ab}\T, \bb]$}; see \cite{clarkson2010coresets}. 
	
	When the method uses the stepsize sequence \eqref{alpha2k2}, it is possible to give the following affine invariant convergence rate bounds (see \cite{freund2016new}):
	\begin{equation}
		\begin{aligned}
			f(\x_k) - f^* & \leq \frac{2\kappa_{f, \OM}}{k + 4} \, , \\
			\min_{i \in ]0:k]} G(\x_i) & \leq \frac{9\kappa_{f, \OM}}{2k} \, , 
		\end{aligned}
	\end{equation}
	thus in particular slightly improving the rate we gave in Theorem \ref{t:1/k} since we have that $\kappa_{f, \OM} \leq LD^2$. \\
	% 	It is sometimes convenient to consider also a local curvature constant $C^*_f$ in an optimum $x^*$:
	% 	\begin{equation}
	% 		C^*_{f, \OM} =  \sup_{\substack{y \in \OM, \\ d = y - x^*, \\ \alpha \in (0, 1]} } \frac{2}{\alpha^2} (f(x^* + \alpha d) - f^*	- \alpha \ps{\nabla f(x^*)}{d}) \, .
	% 	\end{equation}
	% 	From the definition it easily follows $C^*_{f, \OM} \leq \kappa_{f, \OM}$.
	% 	Given a finite set of atoms $A$ with $\OM = \conv(A)$, the $\eps$-\textit{coreset} definition for the MEB problem extends to generic objectives as a subset $S$ of $A$ such that, for $x_S \in \argmin_{y \in \conv(S)} f(y)$, we have $G(x_S) \leq 2\eps C^*_f$. 
	% 	In \cite{clarkson2010coresets}, it is proved that a EFW method finds an $\eps \kappa_f/C^*_f - $ coreset in at most $2\lceil 1/\eps \rceil$ iterations. It is sometimes convenient to consider also a local curvature constant $C^*_f$ in an optimum $x^*$:
	% 	\begin{equation}
	% 		C^*_{f, \OM} =  \sup_{\substack{y \in \OM, \\ d = y - x^*, \\ \alpha \in (0, 1]} } \frac{2}{\alpha^2} (f(x^* + \alpha d) - f^*	- \alpha \ps{\nabla f(x^*)}{d}) \, .
	% 	\end{equation}
	% 	From the definition it easily follows $C^*_{f, \OM} \leq \kappa_{f, \OM}$.
	% 	Given a finite set of atoms $A$ with $\OM = \conv(A)$, the $\eps$-\textit{coreset} definition for the MEB problem extends to generic objectives as a subset $S$ of $A$ such that, for $x_S \in \argmin_{y \in \conv(S)} f(y)$, we have $G(x_S) \leq 2\eps C^*_f$. 
	% 	In \cite{clarkson2010coresets}, it is proved that a EFW method finds an $\eps \kappa_f/C^*_f - $ coreset in at most $2\lceil 1/\eps \rceil$ iterations. 
	
	\subsection{Support identification for the AFW}
	It is a classic result that the AFW under some strict complementarity conditions and for strongly convex objectives identifies in finite time the face containing the solution \cite{guelat1986some}. Here we report some explicit bounds for this property proved in \cite{bomze2020active}. We first assume that $\OM = \Delta_{n - 1}$, and introduce the multiplier functions
	\begin{equation}
		\lambda_i(\x) = \ps{\nabla f(\x)}{(\e_i - \x)}
	\end{equation} 
	for $i \in \irg 1n$. Let $\x^*$ be a stationary point for $f$, with the objective $f$ not necessarily convex. It is easy to check that $\{\lambda_i(\x^*)\}_{i \in [1:n]}$ coincide with the Lagrangian multipliers.
	%, optimizing Wolfe's dual \eqref{p:wd} with $x = x^*$ fixed.
	Furthermore, by complementarity conditions we have $x^*_i \lambda_i(\x^*) = 0$ for every $i \in  \irg 1n$. \imbb{It follows that the set
	 $$I(\x^*) := \{i \in  \irg 1n : \lambda_{i}(\x^*) = 0\}$$
	 contains the support of $\x^*$,
	 $$\supp(\x^*) :=\{ i\in \irg 1n: x_i^*>0\}\, .$$}
	The next lemma uses $\lambda_i$, and the Lipschitz constant $L$ of $\nabla f$, to \imbb{give a lower bound of the so-called {\em active set radius} $r_*$, defining a neighborhood of $\x^*$. Starting the algorithm in this neighbourhood, the active set (the minimal face of $\OM$ containing $\x^*$) is identified in a limited number of iterations.}  
	\begin{lemma} \label{ascmain}
		Let $\x^*$ be a stationary point for $f$ on the boundary of $\Delta_{n - 1}$, $\delta_{\min} = \min_{i: \lambda_{i}(\x^*) > 0} \lambda_i(\x^*)$ and
		\begin{equation}
			r_* = \frac{\delta_{\min}}{\delta_{\min} + 2L} \, .
		\end{equation}
		Let. 
		Assume that for every $k$ for which $\dd_k = \dd_k^{\mathcal{A}}$ holds, the step size $\alpha_k$ is \imbb{not smaller than} the stepsize given by \eqref{eq:L}, $\alpha_k(L) \le \alpha_k$.\\ 
		If ${\| \x_k - \x^*\|}_1 < r_*$, then for some 
		$$j \leq \min\{ n - |I(\x^*)|, |\supp(\x_k)| - 1\}$$ we have $\supp(\x_{k + j}) \subseteq I(\x^*)$ and $\n{\x_{k + j} - \x^*}_1 < r_*$. 
	\end{lemma}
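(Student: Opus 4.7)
The plan is to run an induction on the number of \emph{bad} indices still present in the support, where bad means an index $i\notin I(\x^*)$, and to show that while any bad index remains in $A_k$ the next iterate either is already fine or strictly decreases this count via a drop step, while never leaving the $\ell_1$-ball $B_{r_*}(\x^*)$. The choice $r_*=\delta_{\min}/(\delta_{\min}+2L)$ is designed precisely so that the Lipschitz perturbation of the multipliers $\lambda_i$ around $\x^*$ cannot flip the sign of any $\lambda_i(\x^*)\ge \delta_{\min}$.

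The first step is to establish a multiplier separation inside $B_{r_*}(\x^*)$. Using $\lambda_i(\x)-\lambda_i(\x^*)=(\nabla f(\x)-\nabla f(\x^*))^\intercal(\e_i-\x)+\nabla f(\x^*)^\intercal(\x^*-\x)$, Lipschitz continuity of $\nabla f$, and the bound $\|\e_i-\x\|\le \sqrt 2$ on the simplex, I would derive $|\lambda_i(\x)-\lambda_i(\x^*)|\le c\,L\,\|\x-\x^*\|_1$ for an explicit constant $c$. The identity $2Lr_*=\delta_{\min}(1-r_*)$ then guarantees $\lambda_i(\x)>0$ for every bad $i$, while simultaneously $\lambda_i(\x)$ remains arbitrarily small for every $i\in I(\x^*)$. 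Two consequences follow immediately: (a) any vertex $\s_k$ returned by $\LMO_{\Delta_{n-1}}(\nabla f(\x_k))$ belongs to $I(\x^*)$, so FW steps never insert new bad indices into the active set; (b) whenever a bad index lies in $A_k$, the away vertex $\vv_k=\e_{i_k}$ must also be bad, because it maximises $\lambda_i(\x_k)$ over $A_k$ and bad indices dominate good ones.

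The second step is to argue that such an away step is forced to be a drop step. The direction $\dd_k^{AS}=\x_k-\vv_k$ has $-\nabla f(\x_k)^\intercal \dd_k^{AS}=\lambda_{i_k}(\x_k)\ge \delta_{\min}/2$ (say), while $\|\dd_k^{AS}\|^2\le 2$. Plugging into \eqref{eq:L} gives $\alpha_k(L)\ge \delta_{\min}/(4L)$. Conversely, the maximal feasible stepsize along an away direction is $\alpha_k^{\max}=\beta_{\vv_k}/(1-\beta_{\vv_k})$, where $\beta_{\vv_k}$ is the barycentric weight of $\vv_k$ in the current decomposition; since $\vv_k$ is bad, its weight is bounded by the total bad mass $\mu_k:=\sum_{i\notin I(\x^*)}x_{k,i}\le \tfrac12\|\x_k-\x^*\|_1<\tfrac{r_*}2$, which makes $\alpha_k^{\max}$ smaller than $\alpha_k(L)$. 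Hence $\alpha_k=\alpha_k^{\max}$ by the standing hypothesis, the weight of $\vv_k$ drops to zero, and $|\supp(\x_{k+1})|\le |\supp(\x_k)|-1$.

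Finally I would close the induction by controlling $\|\x_{k+1}-\x^*\|_1$; this is the step I expect to be the genuine technical obstacle. For a drop step on a bad vertex, the update shifts mass $\beta_{\vv_k}\alpha_k^{\max}/(1+\alpha_k^{\max})$ from $\vv_k$ onto the remaining active vertices, and I would combine the monotone objective decrease $f(\x_{k+1})\le f(\x_k)$ with convexity-style arguments around $\x^*$ to deduce $\|\x_{k+1}-\x^*\|_1<r_*$; for FW steps, consequence (a) of the first step ensures $\s_k\in I(\x^*)$, so the iterate moves toward good vertices and stays in the ball. A global counting argument then yields the $j$-bound: each drop step removes a distinct bad index, of which there are at most $n-|I(\x^*)|$, and trivially one cannot shrink the support below a single element, giving $j\le |\supp(\x_k)|-1$ as well. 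Combining these bounds gives $j\le \min\{n-|I(\x^*)|,\, |\supp(\x_k)|-1\}$ and the containment $\supp(\x_{k+j})\subseteq I(\x^*)$, completing the proof. \qed
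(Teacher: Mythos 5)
The paper itself does not spell this argument out: its entire proof is a citation of \cite[Theorem 3.3]{bomze2020active} together with the one-line remark that under the hypotheses each AFW iteration removes one index of $\supp(\x_k)\sm I(\x^*)$ without increasing the $1$-norm distance to $\x^*$. Your skeleton (multiplier separation on the $\ell_1$-ball, badness of the away vertex, forced drop step, counting) is the right reconstruction of that mechanism, and your observation that the maximal feasible away stepsize $x_{k,i_k}/(1-x_{k,i_k})$ falls below the Lipschitz stepsize --- so that the hypothesis $\alpha_k\ge\alpha_k(L)$ forces $\alpha_k=\alpha_k^{\max}$, i.e.\ a drop --- is correct. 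Two steps, however, are not actually closed.

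You never show that the AFW \emph{selects} the away direction while a bad index remains in the support: your consequence (b) only says that \emph{if} an away step is taken, its vertex is bad. If instead a FW step toward a good vertex were taken, no bad index would be dropped, the support could grow, and the bound $j\le|\supp(\x_k)|-1$ would collapse; moreover such a step can increase ${\|\x_k-\x^*\|}_1$ (e.g.\ moving toward $\e_j$ with $j\in I(\x^*)\sm\supp(\x^*)$), so your claim that FW steps ``stay in the ball'' is unjustified. The fix is the same separation estimate: on the ball, $-\ps{\nabla f(\x_k)}{\dd_k^{FW}}=\max_i\bigl(-\lambda_i(\x_k)\bigr)\le p$ while $-\ps{\nabla f(\x_k)}{\dd_k^{AS}}\ge\delta_{\min}-p$, so the away direction dominates provided the perturbation satisfies $p<\delta_{\min}/2$ --- a stronger requirement than the $p<\delta_{\min}$ you need for sign preservation, so the constant you leave implicit (``an explicit constant $c$'') genuinely matters. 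The second gap is your route to the $\ell_1$ non-increase via ``monotone objective decrease plus convexity-style arguments'': this cannot work, since the lemma makes no convexity assumption (the surrounding text stresses exactly this point). The correct argument is elementary arithmetic: a drop step rescales every coordinate by $1+\alpha_k^{\max}$ and zeroes $x_{k,i_k}$, so the $i_k$-term of ${\|\x_{k+1}-\x^*\|}_1$ decreases by exactly $x_{k,i_k}$ (recall $x^*_{i_k}=0$ because the dropped vertex is bad), while by the triangle inequality the remaining terms increase by at most $\alpha_k^{\max}\sum_{j\ne i_k}x_{k,j}=\alpha_k^{\max}(1-x_{k,i_k})=x_{k,i_k}$; the net change is $\le 0$. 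With these two repairs your outline matches the argument of the cited reference.
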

	\begin{proof}
		Follows from \cite[Theorem 3.3]{bomze2020active}, since under the assumptions the AFW sets one variable in $\supp(x_k)\sm I(\x^*)$ to zero at every step without increasing the $1$-norm distance from $\x^*$. \qed
	\end{proof} 
	The above lemma does not require convexity and was applied in \cite{bomze2020active} to derive active set identification bounds in several convex and non-convex settings. Here we focus on the case where the domain $\OM = \conv(A)$ with $|A| < + \infty$ is a generic polytope, and \imbb{ where $f$ is $\mu$-strongly convex for some $\mu>0$, i.e.
	\begin{equation}\label{mustr}
	f(\y) \geq f(\x) + \ps{\nabla f(\x)}{(\y - \x)} + \frac{\mu}{2} \n{\x - \y}^2\quad\mbox{for all } \{\x, \y\} \subset C\, .
\end{equation}}
	 Let $E_{\OM}(\x^*)$ be the face of $\OM$ exposed by $\nabla f(x^*)$:
	\begin{equation}
		E_{\OM}(\x^*) := \argmin_{\x \in \OM} \ps{\nabla f(\x^*)}{\x} \, ,
	\end{equation}
		Let then $\theta_{A}$ be the Hoffman constant (see \cite{beck2017linearly}) related to \imbb{$[\bar{\Ab}\T, \Ib_n,  \e, -\e]\T$}, with $\bar{\Ab}$ the matrix having as columns the elements in $A$. Finally, \imbb{consider the function  $ {f}_A(\y) := f(\bar{\Ab}\y)$ on $ \Delta_{|A| - 1}$, and} let $L_{A}$ be the Lipschitz constant of $\imbb{\nabla} {f_A}$ as well as 
		$$\imbb{
		\delta_{\min} := \min_{\av \in A \sm E_{\OM}(\x^*)}\ps{\nabla f(\x^*)}{(\av - \x^*)}\quad\mbox{and}\quad  r_*(\x^*) := \frac{\delta_{\min}}{\delta_{\min} + 2L_A}\, .}
		$$
		Using \imbb{linearity of} AFW convergence for strongly convex objectives (see Section~\ref{ss:lc}), we have the following result:
	\begin{theorem} 	\label{t:t1}	
	 The sequence $\{\x_k\}$ generated by the AFW with $\x_0 \in A$ \imbb{enters} $E_{\OM}(\x^*)$ for
		\begin{equation}
			k \geq \max\left \{ 2\frac{\ln(h_0) - \ln(\mu_A r_*(\x^*)^2/2)}{\ln(1/q)}, 0\right \} \, ,
		\end{equation}
		where $\mu_A = \frac{\mu}{n\theta_{A}^2}$ and $q\in(0,1)$ is the constant related to the linear convergence rate of the AFW, i.e. $h_k\le q^k h_0$ for all $k$.
		%$f(x_k) - f^* \leq q^k(f(x_0) - f^*)$, 
			\end{theorem}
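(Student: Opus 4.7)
The strategy is to pull the problem back via affine invariance to the simplex $\Delta_{|A|-1}$, where Lemma~\ref{ascmain} applies directly, and then to convert the $\ell_1$-ball hypothesis of that lemma into a primal gap condition using strong convexity together with the Hoffman constant $\theta_A$.

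First I would set up the reduction. Let $\bar{\Ab}$ be the matrix whose columns enumerate $A$, so $\OM = \bar{\Ab}(\Delta_{|A|-1})$, and recall $f_A(\y) := f(\bar{\Ab}\y)$ from the setup. Since $\x_0\in A$, affine invariance ensures the AFW iterates $\{\x_k\}$ on $(f,\OM)$ correspond to AFW iterates $\{\y_k\}\subset \Delta_{|A|-1}$ on $(f_A,\Delta_{|A|-1})$ via $\x_k = \bar{\Ab}\y_k$, with Lipschitz constant $L_A$ for $\nabla f_A$. Using $\nabla f_A(\y) = \bar{\Ab}\T\nabla f(\bar{\Ab}\y)$, the simplex multipliers at any $\y^*$ satisfying $\bar{\Ab}\y^* = \x^*$ become $\lambda_i(\y^*) = \ps{\nabla f(\x^*)}{(\av_i - \x^*)}$. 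Thus $\lambda_i(\y^*) = 0$ precisely when $\av_i\in E_\OM(\x^*)$, so the index set $I(\y^*)$ of Lemma~\ref{ascmain} equals $\{i: \av_i\in E_\OM(\x^*)\}$, the scalar $\delta_{\min}$ coincides with its definition in the statement, and $\x_k\in E_\OM(\x^*)$ whenever $\supp(\y_k)\subseteq I(\y^*)$.

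Next I would convert the primal gap into an $\ell_1$-distance estimate on the simplex. Applying Hoffman's lemma to the polyhedron $\{\y \in \R^{|A|}: \y \ge \oo,\ \e\T\y=1,\ \bar{\Ab}\y = \x^*\}$, whose associated Hoffman constant is precisely $\theta_A$, one can choose $\y^*$ with $\n{\y_k-\y^*} \le \theta_A\,\n{\x_k-\x^*}$. Coupling this with strong convexity $\tfrac{\mu}{2}\n{\x_k-\x^*}^2 \le h_k$ (which follows from \eqref{mustr} applied at $\x=\x^*$, $\y=\x_k$, together with $\ps{\nabla f(\x^*)}{(\x_k-\x^*)} \ge 0$) and with the norm equivalence $\n{\cdot}_1 \le \sqrt{|A|}\,\n{\cdot}_2$ on $\R^{|A|}$, I obtain
\[
\n{\y_k-\y^*}_1^2 \;\le\; |A|\,\theta_A^2\,\n{\x_k-\x^*}^2 \;\le\; \frac{2\,h_k}{\mu_A}\,,
\]
with $\mu_A = \mu/(|A|\,\theta_A^2)$ matching its definition in the theorem. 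Consequently $\n{\y_k-\y^*}_1 < r_*(\x^*)$ as soon as $h_k < \mu_A\,r_*(\x^*)^2/2$.

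Finally I would invert the linear-convergence bound. The inequality $q^k h_0 \le \mu_A\,r_*(\x^*)^2/2$ is equivalent to $k \ge (\ln h_0 - \ln(\mu_A r_*(\x^*)^2/2))/\ln(1/q)$; once this threshold is reached, Lemma~\ref{ascmain} guarantees some $j \le |A|-|I(\y^*)|$ with $\supp(\y_{k+j})\subseteq I(\y^*)$, and pushing forward by $\bar{\Ab}$ yields $\x_{k+j}\in E_\OM(\x^*)$. The leading factor of $2$ in the stated bound reflects the standard drop-step accounting for AFW, wherein only half of the first $k$ iterations can be counted as ``good'' steps contributing to linear decrease of the primal gap, effectively doubling the number of iterations needed. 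The main obstacle I expect is synchronizing the step-size prerequisite $\alpha_k \ge \alpha_k(L_A)$ of Lemma~\ref{ascmain} for away steps (evaluated on the pulled-back objective) with the hypotheses of the linear-convergence bound from Section~\ref{ss:lc}, and verifying that the $\y^*$ produced by Hoffman's lemma at the threshold iteration is a valid choice for invoking Lemma~\ref{ascmain} (which only requires the $\ell_1$-proximity to hold at a single iteration); once these compatibilities are verified, the numerical conclusion follows from the algebra above.
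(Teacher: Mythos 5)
Your overall route is the same as the paper's: reduce to the simplex by affine invariance, use strong convexity together with the Hoffman constant $\theta_A$ and the norm equivalence between $\n{\cdot}_1$ and $\n{\cdot}$ to turn the primal-gap threshold $h_k\le \mu_A r_*(\x^*)^2/2$ into the $\ell_1$-proximity hypothesis of Lemma~\ref{ascmain}, then invert $h_k\le q^k h_0$ to get the threshold iteration $\bar k$. Up to that point your argument matches the paper's sketch.

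The gap is in the final accounting, i.e.\ in where the factor $2$ comes from. You attribute it to a good-step/drop-step discount, but that cannot be right here: the theorem defines $q$ by $h_k\le q^k h_0$ for \emph{all} $k$, so the linear rate is already stated per iteration and no halving for bad steps is available or needed. The real issue is that after reaching the threshold iterate $\bar k$ you still need $j$ further iterations before the support is contained in $I(\x^*)$, and you only quote the bound $j\le |A|-|I(\x^*)|$, a dimension-dependent quantity that is not controlled by $\bar k$; with only that bound the entry time $\bar k+j$ is not bounded by the stated $2\bar k$. The paper uses the \emph{other} term of the $\min$ in Lemma~\ref{ascmain}: since $\x_0\in A$ is a vertex and the AFW adds at most one atom to the active set per iteration, $|\supp(\x_{\bar k})|\le \bar k+1$, hence $j\le|\supp(\x_{\bar k})|-1\le\bar k$ and the total is at most $2\bar k$. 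That observation, which your proposal omits, is precisely the source of the factor $2$, so as written your argument does not establish the claimed bound.
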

	\begin{proof}[sketch]
		We present an argument in the case $\OM = \Delta_{n - 1}$, $A = \{e_i\}_{i \in [1:n]}$ which can be easily extended by affine invariance to the general case (see \cite{bomze2020active} for details). In this case $\theta_{A} \geq 1$ and we can define $\imbb{\bar{\mu} := {\mu}/n }\geq \mu_{A} $. \\
		To start with, the number of steps needed 
		to reach the condition 
		\begin{equation} \label{hkleq}
			h_k \leq \frac{\mu}{2n}r_*(\x^*)^2 = \frac{\bar{\mu}}{2} r_*(\x^*)^2
		\end{equation}
		is at most 
		$$ \bar{k} = \max \left\{ \left\lceil\frac{\ln(h_0) - \ln(\bar{\mu} r_*(x^*)^2/2)}{\ln(1/q)}\right\rceil, 0\right\}\ . $$	
				\imbb{Now we combine $n\n{\cdot} \geq {\n{\cdot}}_1$ with strong convexity and relation~\eqref{hkleq}} to obtain ${\n{\x_k - \x^*}}_1 \leq r_*(\x^*)$, hence in particular ${\n{\x_k - \x^*}}_1 \leq r_*(\x^*)$ for every $k \geq \bar{k}$. Since $\x_0$ is a vertex of the simplex, and at every step at most one coordinate is added to the support of the current iterate, $|\supp(\x_{\bar{k}})| \leq \bar{k} + 1$. The claim follows by applying Lemma~\ref{ascmain}. \qed 	\end{proof}
	Additional bounds under a quadratic growth condition weaker than strong convexity and strict complementarity are reported in \cite{garber2020revisiting}. 
	
	Convergence and finite time identification for the PFW and the AFW are proved in \cite{bomze2019first} for a specific class of non-convex minimization problems over the \imbb{standard} simplex, under the additional assumption that  the sequence generated has a finite set of limit points. In another line of work, active set identification strategies combined with FW variants have been proposed in \cite{cristofari2020active} and \cite{sun2020safe}.
	
	\subsection{Inexact linear oracle}
	
	In many real-world applications, linear subproblems can only be solved approximately. This is the reason why the convergence of FW variants is often analyzed under some error term for the linear minimization oracle (see, e.g., \cite{braun2019blended,braun2017lazifying,freund2016new,jaggi2013revisiting,konnov2018simplified}). A common assumption, relaxing the FW vertex exact minimization property, is to have access to a point (usually a vertex) $\tilde{\s}_k$ such that
	\begin{equation} \label{eq:fwer}
		\ps{\nabla f(\x_k)}{(\tilde{\s}_k-\x_k)} \leq \min_{\s \in \OM} \ps{\nabla f(\x_k)}{(\s-\x_k)} + \delta_k \, ,
	\end{equation} 
	for a sequence $\{\delta_k\}$ of non negative approximation errors. \\
	If the sequence $\{\delta_k\}$ is constant and equal to some $\delta > 0$, then trivially the lowest possible approximation error achieved by the FW method is $\delta$. At the same time, \cite[Theorem 5.1]{freund2016new} implies a rate of ${\mathcal O}(\frac{1}{k} + \delta)$ if the stepsize $\alpha_k= \frac{2}{k + 2}$ is used. 
	
	The ${\mathcal O}(1/k)$ rate can be instead retrieved by assuming that $\{\delta_k\}$ converges to $0$ quickly enough, and in particular if 
	\begin{equation} \label{eq:eras}
		\delta_k  = \frac{\delta \kappa_{f, C}}{k + 2}
	\end{equation}
	for a constant $\delta > 0 $. Under \eqref{eq:eras}, in \cite{jaggi2013revisiting} a convergence rate of 
	\begin{equation}
		f(x_k) - f^* \leq \frac{2\kappa_{f, \OM}}{k + 2}(1 + \delta)
	\end{equation}
	was proved for the FW method with $\alpha_k$ given by exact line search or equal to $\frac 2{k + 2}$, as well as for the EFW. 
	
	A linearly convergent variant making use of an approximated linear oracle recycling previous solutions to the linear minimization subproblem is studied in \cite{braun2019blended}. In \cite{freund2016new,hogan1971convergence}, the analysis of the classic FW method is extended to the case of inexact gradient information. In particular in \cite{freund2016new}, assuming the availability of the $(\delta, L)$ oracle introduced in \cite{devolder2014first}, a convergence rate of ${\mathcal O}(1/k + \delta k)$ is proved.
	\section{Improved rates for strongly convex objectives} \label{s:ir}
	\begin{table}[]
		\begin{center}
			\def\arraystretch{1.5}
			\begin{tabular}{|c|c|c|c|c|}
				\hline
				Method   & Objective & Domain          & Assumptions                                                                        & Rate            \\ \hline
				FW       & NC        & Generic         & -                                                                                  & ${\mathcal O}(1/\sqrt{k})$ \\ \hline
				FW       & C         & Generic         & -                                                                                  & ${\mathcal O}(1/k)$        \\ \hline
				FW       & SC        & Generic        & $\x^* \in \ri(\Omega)$                                                           & Linear          \\ \hline
				Variants & SC        & Polytope        & -                                                                                  & Linear          \\ \hline
				FW       & SC        & Strongly convex & -                                                                                  & ${\mathcal O}(1/k^2)$      \\ \hline
				FW       & SC        & Strongly convex & $\min \n{\nabla f(\x) } > 0 $ & Linear          \\ \hline
			\end{tabular}
			\caption{Known convergence rates for the FW method and the variants covered in this review. NC, C and SC stand for non-convex, convex and strongly convex respectively. } \label{tab:2}
		\end{center}
	\end{table}	
	\subsection{Linear convergence under an angle condition} \label{ss:lc}
In the rest of this section we assume that $f$ is $\mu$-strongly convex~\eqref{mustr}.
 We also assume that the stepsize is given by exact linesearch or by \eqref{eq:L}. 
 
Under this assumption, an asymptotic linear convergence rate for the FDFW on polytopes was given in the early work \cite{guelat1986some}. Furthermore, in \cite{garber2016linearly} a linearly convergent variant was proposed, making use however of an additional local linear minimization oracle. 

Recent works obtain linear convergence rates by proving the angle condition 
\begin{equation} \label{eq:angle}
\imbb{	\ps{- \nabla f(\x_k)}{\widehat{\dd}_k} \geq  \frac{\tau}{{\n{\x_k - \x^*}}} \, \ps{\nabla f(\x_k)}{ ( \x_k - \x^*)}}
\end{equation}
 for some \imbb{$\tau >0$ and some} $\x^* \in \argmin_{\x \in C} f(\x)$. As we shall see in the next lemma, under \eqref{eq:angle} it is not difficult to prove linear convergence rates in the number of \textit{good steps}. These are FW steps with $\alpha_k = 1$ and steps in any descent direction with $\alpha_k < 1$. 
\begin{lemma} \label{l:ani}
	If the step $k$ is a good step and \eqref{eq:angle} holds, then
	\begin{equation}
	%	f(\x_{k + 1}) - f^* 
\imbb{	h_{k+1} \leq \max \left\{{\textstyle{ \frac{1}{2}}, 1 -  \frac{\tau^2\mu}{L}} \right\} h_k % (f(\x_k) - f^*) 
	\, .}
	\end{equation}
\end{lemma}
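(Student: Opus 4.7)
The plan is to split the analysis along the two exclusive sub-cases built into the notion of a \emph{good step}. In the first sub-case, $\dd_k = \dd_k^{FW}$ with $\alpha_k = 1 = \alpha_k^{\max}$, and Lemma~\ref{l:g2} immediately delivers $h_{k+1} \leq \tfrac{1}{2}\,h_k$, matching the $\tfrac12$ in the maximum.

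In the second sub-case, $\alpha_k < \alpha_k^{\max}$. The first task is to recover the per-step decrease \eqref{eq:cr}: for the Lipschitz-dependent stepsize this is exactly Lemma~\ref{l:am1}. For exact line search one argues that the interior minimizer of $\varphi(\alpha) := f(\x_k + \alpha\,\dd_k)$ is no smaller than $\alpha_k(L)$, since $\varphi'(\alpha_k) = 0$ together with $L\|\dd_k\|^2$-Lipschitzness of $\varphi'$ and $\varphi'(0) = \langle \nabla f(\x_k), \dd_k\rangle < 0$ forces $\alpha_k \geq -\langle \nabla f(\x_k), \dd_k\rangle/(L\|\dd_k\|^2) = \alpha_k(L)$. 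Since the exact line search value minimizes $\varphi$ on $[0,\alpha_k^{\max}]$, its decrease dominates that obtained at $\alpha_k(L)$, so \eqref{eq:cr} still holds.

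Next I invoke the angle condition \eqref{eq:angle} squared to get $(\langle \nabla f(\x_k), \widehat{\dd}_k\rangle)^2 \geq \tau^2\,\|\x_k-\x^*\|^{-2}\,(\langle \nabla f(\x_k), \x_k-\x^*\rangle)^2$. Strong convexity \eqref{mustr} evaluated at $\y=\x^*$ gives $\langle \nabla f(\x_k), \x_k-\x^*\rangle \geq h_k + \tfrac{\mu}{2}\|\x_k-\x^*\|^2$. Writing $B := \|\x_k-\x^*\|$, the AM--GM inequality $h_k + \tfrac{\mu}{2}B^2 \geq B\sqrt{2\mu h_k}$, after squaring and dividing by $B^2$, removes the unknown distance and yields $(\langle \nabla f(\x_k), \widehat{\dd}_k\rangle)^2 \geq 2\tau^2\mu\, h_k$. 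Substituting into \eqref{eq:cr} produces $h_{k+1} \leq (1 - \tau^2\mu/L)\,h_k$, and combining with the first sub-case gives the claim.

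The main obstacle is the AM--GM trick: it is the one clever move that cancels the dependence on the unknown $\|\x_k - \x^*\|$ after the angle condition is squared. The remainder is a routine chaining of the descent lemma, the angle condition, and strong convexity. A secondary technicality is verifying that the per-step decrease \eqref{eq:cr} holds under both stepsize rules, as treated above.
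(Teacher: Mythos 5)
Your proof is correct and follows essentially the same route as the paper: a case split between the full FW step (handled by Lemma~\ref{l:g2}) and the remaining good steps (handled by the descent bound \eqref{eq:cr}, the squared angle condition, and strong convexity). Your AM--GM step is just an equivalent way of deriving the paper's bound $\left[\ps{\nabla f(\x_k)}{(\x_k-\x^*)}\right]^2 \geq 2\mu\n{\x_k-\x^*}^2 h_k$, which the paper obtains by minimizing the strong-convexity quadratic over $\alpha\in\R$; your extra remark justifying \eqref{eq:cr} under exact line search is a welcome detail the paper leaves implicit.
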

\begin{proof}
	If the step $k$ is a full FW step then \imbb{Lemma \ref{l:g2} entails
	$h_{k+1}\le \frac 12\, h_k$.}
%	\begin{equation}
%		f(\x_{k + 1}) - f^* \leq \frac{f(\x_k) - f^*}{2} \, .
%	\end{equation}
	In the remaining case, first observe that by strong convexity
	\begin{equation}%\label{eq:scb}
		\begin{array}{rcl}
			 f^* &= &f(\x^*) \geq f(\x_k) + \ps{\nabla f(\x_k)}{(\x^* - \x_k)} + \frac{\mu}{2}\n{\x_k - \x^*}^2 \\[0.5em]
			&\geq &\min\limits_{\alpha \in \mathbb{R}}\left [ f(\x_k) + \alpha \ps{\nabla f(\x_k)}{(\x^* - \x_k)} + \frac{\alpha^2\mu}{2}\n{\x_k - \x^*}^2 \right] \\[0.5em] 
			&= &f(\x_k) - \frac{1}{2\mu {\n{\x_k - \x^*}}^2}	\left[ \ps{\nabla f(\x_k)}{( \x_k - \x^*)}\right]^2 \, ,
		\end{array}
	\end{equation}
\imbb{which means 
\begin{equation}\label{eq:scb}h_k \le \frac 1{2\mu {\n{\x_k - \x^*}}^2}\left[ \ps{\nabla f(\x_k)}{(\x_k - \x^*)}\right]^2 \, .\end{equation}	
We can then proceed using} the bound~\eqref{eq:cr} from Lemma~\ref{l:am1} in the following way:
	\begin{equation}
		\begin{array}{rcl}
	h_{k+1} &=		&f(\x_{k + 1}) - f^* \leq f(\x_k) - f^* - \frac{1}{2L}\left [\ps{\nabla f(\x_k)}{\widehat{\dd}_k}\right]^2 \\[0.5em] 
			&\leq & h_k - \frac{\tau^2}{2L{\n{\x_k - \x^*}}^2}  \left[ \ps{\nabla f(\x_k)}{ (  \x_k - \x^* )} \right]^2 \\[0.5em]
			&\leq &h_k \left(1 - \frac{ \tau^2\mu}{L}\right) \, ,		
		\end{array}	
	\end{equation}
	where we used \eqref{eq:angle} in the \imbb{second} inequality and \eqref{eq:scb} in the \imbb{third} one. 	\qed
\end{proof}
As a corollary, under \eqref{eq:angle} we have the rate
\begin{equation}
	f(\x_{k}) - f^* \imbb{= h_k} \leq  \max \left\{{\textstyle{ \frac{1}{2}},  1 -  \frac{\tau^2\mu}{L}}\right\}^{\gamma(k)} \imbb{h_0}
\end{equation}
for any method with non increasing $\{f(\x_k)\}$ and following Algorithm \ref{alg:GS}, with $\gamma(k) \leq k$ an integer denoting the number of good steps until step $k$.  It turns out that for all the variants we introduced in this review we have $\gamma(k) \geq Tk$ for some constant $T > 0$.  When $\x^*$ is in the relative interior of $\OM$, the FW method satisfies \eqref{eq:angle} and we have the following result (see \cite{guelat1986some,lacoste2015global}):
\begin{theorem}
	If $\x^* \in \ri(\OM)$, then
	\begin{equation}
		f(\x_{k}) - f^* \leq \left [ 1 - \frac{\mu}{L} \left(\frac{\dist(\x^*, \partial \OM)}{D}\right)^2\right]^k (f(\x_0) - f^*) \, .
	\end{equation}	
\end{theorem}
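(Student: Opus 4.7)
The plan is to reduce the claim to Lemma \ref{l:ani} with the choice $\tau := \dist(\x^*, \partial \OM)/D$. For the classical FW method every iteration qualifies as a good step in the sense of Lemma \ref{l:ani}: either $\alpha_k = 1$ (a full FW step) or $\alpha_k < 1 = \alpha_k^{\max}$ (a descent step with $\alpha_k < \alpha_k^{\max}$). Hence $\gamma(k) = k$, and iterating the per-step contraction of Lemma \ref{l:ani} immediately yields the claimed geometric rate.

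The crux is verifying the angle condition \eqref{eq:angle}. The strategy is to build an explicit feasible competitor to the FW vertex that points opposite to the gradient, exploiting the relative interior assumption on $\x^*$. Setting $r := \dist(\x^*, \partial \OM) > 0$ and assuming $\nabla f(\x_k) \neq \oo$ (otherwise $\x_k$ is already optimal), let $\p$ denote the orthogonal projection of $\nabla f(\x_k)$ onto the linear subspace parallel to $\aff(\OM)$. Then $\y := \x^* - r\,\widehat{\p}$ belongs to $\OM$ by the definition of $r$. Using the optimality of $\s_k$ and the fact that $\x_k - \x^*$ and $\widehat{\p}$ lie in the parallel subspace to replace $\nabla f(\x_k)$ by $\p$ in the relevant inner products, I would expand $-\ps{\nabla f(\x_k)}{\dd_k} \geq -\ps{\nabla f(\x_k)}{(\y - \x_k)}$ into
\begin{equation*}
-\ps{\nabla f(\x_k)}{\dd_k} \geq \ps{\nabla f(\x_k)}{(\x_k - \x^*)} + r\,\n{\p}.
\end{equation*}
A Cauchy--Schwarz estimate bounds $r\,\n{\p} \geq \frac{r}{\n{\x_k - \x^*}}\,\ps{\nabla f(\x_k)}{(\x_k - \x^*)}$; convexity guarantees that the first summand on the right is nonnegative and may be discarded; dividing through by $\n{\dd_k} \leq D$ then delivers \eqref{eq:angle} with $\tau = r/D$.

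The step I expect to be trickiest is the correct handling of the non-full-dimensional case: the naive choice $\x^* - r\,\widehat{\nabla f(\x_k)}$ need not lie in $\aff(\OM)$, so the projection of the gradient onto the parallel subspace is essential to keep $\y$ feasible. A secondary subtlety is reconciling the $\max\{1/2, 1 - \tau^2 \mu/L\}$ factor from Lemma \ref{l:ani} with the clean rate $1 - \tau^2 \mu/L$ stated in the theorem: in the regime $\tau^2 \mu/L \leq 1/2$ the two expressions coincide, and in the complementary regime the full-step bound $1/2$ is at most $1 - \tau^2 \mu/L$, so the stated bound holds in either case.
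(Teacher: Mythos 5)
Your proposal follows essentially the same route as the paper: verify the angle condition \eqref{eq:angle} with $\tau=\dist(\x^*,\partial\OM)/D$ by comparing the FW vertex against a feasible competitor of the form $\x^*-r\,\widehat{\p}$ (the paper uses $\x^*+\delta\,\widehat{\g}$ with $\g=-\nabla f(\x_k)$ after reducing to the full-dimensional case, so your projection onto the subspace parallel to $\aff(\OM)$ is just a more explicit version of that reduction), and then iterate Lemma~\ref{l:ani} over the good steps. The one genuine slip is your reconciliation of the factor $\max\{1/2,\,1-\tau^2\mu/L\}$ with the stated clean rate: in the ``complementary regime'' $\tau^2\mu/L>1/2$ one has $1-\tau^2\mu/L<1/2$, so the inequality you invoke points the wrong way, and Lemma~\ref{l:ani} would only give the contraction factor $1/2$, which does \emph{not} imply the claimed bound there. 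The correct resolution, which is the one the paper uses, is that this regime is vacuous: $\dist(\x^*,\partial\OM)\le D/2$ forces $\tau\le 1/2$, and $\mu\le L$, hence $\tau^2\mu/L\le 1/4<1/2$ and the maximum always equals $1-\tau^2\mu/L$. With that one-line fix your argument is complete and matches the paper's.
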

\begin{proof}
	We can assume for simplicity $\intp(\OM) \neq \emptyset$, since otherwise we can restrict ourselves to the affine hull of $\OM$. Let $\delta=\dist(\x^*, \partial \OM)$ and $\g = -\nabla f(\x_k)$. First, by assumption we have $\x^* + \delta \widehat{\g} \in \OM$. Therefore
	\begin{equation}
		\ps{\g}{\dd_k^{FW}} \geq \ps{\g}{((\x^*+\delta \widehat{\g} ) - \x)} = \delta \ps{\g}{\widehat{\g}} + \ps{\g}{(\x^* - \x)} \geq \delta \n{\g} + f(\x) - f^* \geq \delta \n{\g} \, ,
	\end{equation}
	where we used $\x^*+\delta \widehat{\g}  \in \OM$ in the first inequality and convexity in the second. We can conclude
	\begin{equation}
		\ps{\g}{\frac{\dd_k^{FW}}{\n{\dd_k^{FW}}}} \geq \ps{\g}{\frac{\dd_k^{FW}}{D}} \geq  \frac{\delta}{D} \n{\g} \geq \frac{\delta}{D} \ps{\g}{\left(\frac{\x_k - \x^*}{\n{\x_k - \x^*}}\right)} \, .
	\end{equation}
The thesis follows by Lemma~\ref{l:ani}, noticing that for $\tau = \frac{\dist(\x^*, \partial \OM)}{D} \leq \frac{1}{2}$ we have $1 - \tau^2\frac{\mu}{L} > \frac{1}{2}$. \qed 
\end{proof}

In \cite{lacoste2015global}, the authors proved that directions generated by the AFW and the PFW on polytopes satisfy condition \eqref{eq:angle}, with $\tau = \PWidth(A)/D$ and $\PWidth(A)$,  pyramidal width of $A$. While $\PWidth(A)$ was originally defined with a rather complex minmax expression, in \cite{pena2018polytope} it was then proved 
\begin{equation}
	\PWidth(A) =	\min_{F \in \faces(C)} \dist(F, \conv(A \sm F)) \, .
\end{equation}
This quantity can be explicitly computed in a few special cases. For $A = \{0, 1\}^n$ we have $\PWidth(A) = 1/\sqrt{n}$, while for $A = \{e_i\}_{i \in [1:n]}$ (so that $\OM$ is the $n - 1$ dimensional simplex)
\begin{equation}
	\PWidth(A) = \begin{cases}
		\frac{2}{\sqrt{n}} &\tx{ if }n \tx{ is even} \\
		\frac{2}{\sqrt{n - 1/n}} &\tx{ if }n \tx{ is odd.}  
	\end{cases}
\end{equation}
Angle conditions like \eqref{eq:angle} with $\tau$ dependent on the number of vertices used to represent $x_k$ as a convex combination were given in \cite{bashiri2017decomposition} and \cite{beck2017linearly} for the FDFW and the PFW respectively. In particular, in \cite{beck2017linearly} a geometric constant $\Omega_{\OM}$ called vertex-facet distance was defined as
\begin{equation}
	\Omega_{\OM} = \min  \{\dist(\vv, H) : \vv \in V(\OM) \, , H \in {\mathcal H}(\OM), \, \vv \notin H \} \, ,
\end{equation}
with $V(\OM)$ the set of vertices of $\OM$, and ${\mathcal H}(\OM)$ the set of supporting hyperplanes of $\OM$ (containing a facet of $\OM$). Then condition \eqref{eq:angle} is satisfied for $\tau= \Omega_{\OM}/s$, with $\dd_k$ the PFW direction and $s$ the number of points used in the active set $A_k$. \\
In \cite{bashiri2017decomposition}, a geometric constant $H_s$ was defined depending on the minimum number~$s$ of vertices needed to represent the current point $x_k$, as well as on the proper\footnote{i.e., those inequalities strictly satisfied for some $\x \in \OM$.}  inequalities $\q_i\T\x \leq b_i$, $i \in \irg 1m$, appearing in a description of $\OM$. For each of these inequalities the {\em second gap} $g_i$ was defined as 
\begin{equation}
	g_i = 	\max_{\vv \in V(\OM)} \ps{\q_i}{\vv} - \secondmax_{\vv \in V(\OM)} \ps{\q_i}{\vv} \, ,\quad i\in \irg 1m\, ,
\end{equation}
with the secondmax function giving the second largest value achieved by the argument. Then $H_s$ is defined as 
\begin{equation}
	H_s := \imbb{\max {\textstyle{\left\{  \sum\limits_{j = 1}^n \left(\sum\limits_{i \in S} \frac{a_{ij}}{g_i} \right)^2 : S \in { {[1:m] }\choose s} \right \}}}}\, .
\end{equation}
The arguments used in the paper imply that \eqref{eq:angle} holds with \imbb{$\tau= \frac 2{D\sqrt{H_s}}$} if $\dd_k$ is a FDFW direction and $\x_k$ the convex combination of at most $s$ vertices. We refer the reader to \cite{pena2018polytope} and \cite{rademacher2020smoothed} for additional results on these and related constants. 

The linear convergence results for strongly convex objectives are extended to compositions of strongly convex objectives with affine transformations in \cite{beck2017linearly}, \cite{lacoste2015global}, \cite{pena2018polytope}. In \cite{gutman2020condition}, the linear convergence results for the AFW and the FW method with minimum in the interior are extended with respect to a generalized condition number $L_{f, \OM, D}/\mu_{f, \OM, D}$, with $D$ a distance function on $\OM$. 

For the AFW, the PFW and the FDFW, linear rates with no bad steps ($\gamma(k) = k$) are given in \cite{rinaldi2020avoiding} for non-convex objectives satisfying a Kurdyka-\L ojasiewicz inequality. In \cite{rinaldi2020unifying}, condition \eqref{eq:angle} was proved for the FW direction and orthographic retractions on some convex sets with smooth boundary. The work \cite{combettes2020boosting} introduces a new FW variant using a subroutine to align the descent direction with the projection on the tangent cone of the negative gradient, thus implicitly maximizing $\tau$ in \eqref{eq:angle}.  \\			
	
	\subsection{Strongly convex domains} \label{s:scd}
	When $\OM$ is strongly convex we have a ${\mathcal O}(1/k^2)$ rate (see, e.g., \cite{garber2015faster,kerdreux2021projection}) for the classic FW method. Furthermore, when $\OM$ is 
	$\beta_{\OM}$-strongly convex and $\n{\nabla f(\x)} \geq c > 0$, then we have the linear convergence rate (see \cite{demianov1970approximate,dunn1979rates,kerdreux2020affine,levitin1966constrained})
	\begin{equation}
	%	f(\x_{k + 1}) - f^* 
	\imbb{h_{k+1}} \leq \max \left\{{\textstyle{\frac{1}{2}, 1 - \frac{L}{2c\beta_{\OM}} }}\right\} \imbb{h_k}
	%(f(\x_k) - f^*) 
	\, .
	\end{equation}
	Finally, it is possible to interpolate between the ${\mathcal O}(1/k^2)$ rate of the strongly convex setting and the ${\mathcal O}(1/k)$ rate of the general convex one by relaxing strong convexity of the objective with H\"olderian error bounds \cite{xu2018frank} and also by relaxing strong convexity of the domain with uniform convexity \cite{kerdreux2021projection}.	
	\section{Extensions} \label{s:E}
	\subsection{Block coordinate Frank-Wolfe method}
	The block coordinate FW (BCFW) was introduced in \cite{lacoste2013block} for block product domains of the form $\OM = \OM^{(1)} \times ... \times \OM^{(m)} \subseteq \R^{n_1 + ... + n_m} $, and applied to structured SVM training. The algorithm operates by selecting a random block and performing a FW step in that block. Formally, for $\s \in \R^{m_i}$ let $\s^{(i)} \in \R^n$ be the vector with all \imbb{blocks} equal to $\oo$ except for the $i$-th \imbb{block} equal to $\s$. We can write the direction of the BCFW as
	\begin{equation}
		\begin{aligned}
			\dd_k & = \s_k^{(i)} - \x_k \\
			\s_k &\in \argmin_{\s \in \OM^{(i)}} \ps{\nabla f(\x_k)}{\s^{(i)}}
		\end{aligned}
	\end{equation}
	for a random index $i \in \irg 1n$. \\	
	In \cite{lacoste2013block}, a convergence rate of
	\begin{equation} \label{fwblockcr}
		\mathbb{E}[f(x_k)] - f^* \leq \frac{\imbb{2Km}}{k + 2m}
	\end{equation}	
	is proved, for $K  = h_0 + \kappa_f^{\otimes}$, with $\kappa_f^{\otimes}$ the product domain curvature constant, defined as  $\kappa_f^{\otimes} = \sum \kappa_f^{\otimes, i}$ where $\kappa_f^{\otimes, i}$ are the curvature constants of the objective \imbb{fixing the blocks outside} $\OM^{(i)}$:
	\begin{equation} \label{eq:cfo}
		\kappa_f^{\otimes, i} := 
\imbb{			\sup\left\{
			2{\textstyle{ \frac{f( \x+\alpha \dd^{(i)}) - f(\x)	- \alpha \ps{\nabla f(\x)}{\dd^{(i)}}}{\alpha^2}}}: \dd \in \OM-\x,\, \x\in \OM, \, \alpha \in (0, 1]
			\right\} \, .}
		%\sup_{\substack{x \in \OM \\ \alpha \in (0, 1] \\ d \in \OM - x} } \frac{2}{\alpha^2} (f(x + \alpha d^{(i)}) - f(x) - \alpha \ps{\nabla f(x)}{d^{(i)}}) \, . 
	\end{equation}
	An asynchronous and parallel generalization for this method was given in \cite{wang2016parallel}. This version assumes that a cloud oracle %$\mathcal{O}$   % <--- notation superfluous, not used in the sequel!!
	% if really necessary, use CLMO instead
	is available, modeling a set of worker nodes each sending information to a server at different times. This information consists of an index $i$ and %\imbb{the result} $\s_{(i)}$  
	\imbb{the following LMO on}  $\OM^{(i)}$:
	\begin{equation}
		\s_{(i)} \in \argmin\limits_{\s \in \OM^{(i)}} \ps{\nabla f(\x_{\widetilde{k}})}{\s^{(i)}} \, .
	\end{equation}
	The algorithm is  called asynchronous because $\widetilde{k}$ can be smaller than $k$, modeling a delay in the information sent by the node. Once the server has collected a minibatch $S$ of $\tau$ distinct indexes (overwriting repetitions), the descent direction is defined as
	\begin{equation}
		\dd_k = \sum_{i \in S} \s^{(i)}_{\imbb{(i)}} \, ,
	\end{equation} 
	%using $s^{(i)}$ as a shorthand for $s^{(i)}_{(i)}$. % <-- again superfluous and confusing here!!
	If the indices sent by the nodes are i.i.d., then under suitable assumptions on the delay, a convergence rate of
	\begin{equation}
		\mathbb{E}[f(\x_k)] - f^* \leq \frac{2mK_{\tau}}{\tau^2k + 2m}
	\end{equation}
	can be proved, where $K_{\tau} = m\kappa_{f, \tau}^{\otimes}(1 + \delta) + h_0$ for $\delta$ depending on the delay error, with $\kappa_{f, \tau}^{\otimes}$ the average curvature constant in a minibatch keeping all the components not in the minibatch fixed. 
	
	In \cite{osokin2016minding}, several improvements are proposed for the BCFW, including an adaptive criterion to prioritize blocks based on their FW gap, and block coordinate versions of the AFW and the PFW variants.
	
	In \cite{shah2015multi}, a multi plane BCFW approach is proposed in the specific case of the structured SVM, based on caching supporting planes in the primal, corresponding to block linear minimizers in the dual. In \cite{berrada2018deep}, the duality for structured SVM between BCFW and stochastic subgradient descent is exploited to define a learning rate schedule for neural networks based only on one hyper parameter. The block coordinate approach is extended to the generalized FW in \cite{beck2015cyclic}, with coordinates however picked in a cyclic order.

	\subsection{Variants for the min-norm point problem}\label{mnpsec}
	Consider the min-norm point (MNP) problem
	\begin{equation}\label{MNP}
		\min_{\x \in \OM} {\|\x\|}_{*} \, ,
	\end{equation}
	with $\OM$ a closed convex subset of $\R^n$ and ${\n{\cdot}}_{*}$ a norm on $\R^n$. In \cite{wolfe1976finding}, a FW variant is introduced to solve the problem when $\OM$ is a polytope and ${\n{\cdot}}_*$ is the standard Euclidean norm $\n{\cdot}$. Similarly to the variants introduced in Section~\ref{s:var}, it generates a sequence of active sets $\{A_k\}$ with $ \s_k\in A_{k + 1}$. At the step $k$ the norm is minimized on the affine hull $\aff(A_k)$ of the current active set $A_k$, that is 	\begin{equation}
		\vv_k = \argmin_{\y \in \aff(A_k)} \n{\y} \, .
	\end{equation} 
	The descent direction $\dd_k$ is then defined as
	\begin{equation}
		\dd_k = \vv_k - \x_k\, ,
	\end{equation}
	and the stepsize is given by a tailored linesearch that allows to remove some of the atoms in the set $A_k$ (see, e.g. \cite{lacoste2015global,wolfe1976finding}). Whenever $\x_{k + 1}$ is in the relative interior of $\conv(A_k)$, the FW vertex is added to the active set (that is, $\s_k \in A_{k + 1}$). Otherwise, at least one of the vertices not appearing in a convex representation of $\x_k$ is removed. This scheme converges linearly when applied to generic smooth strongly convex objectives (see, e.g., \cite{lacoste2015global}). 
	
	In \cite{harchaoui2015conditional}, a FW variant is proposed for minimum norm problems of the form 
	\begin{equation}\label{eq:mnp}
		\min \{{\n{\x}}_* : f(\x) \leq 0, \,  \x \in K \}
	\end{equation}
	with $K$ a convex cone, $f$ convex with $L$-Lipschitz gradient. In particular, the optimization domain is $\OM = \{\x\in \R^n : f(\x) \leq 0\} \cap K$. The technique proposed in the article applies the standard FW method to the problems 
	$$\min \{f(\x) : {\n{\x}}_* \leq \delta_k, \, \x \in K\} \, ,$$ 
	with $\{\delta_k\}$ an increasing sequence convergent to the optimal value $\bar{\delta}$ of the problem~\eqref{eq:mnp}. Let $\OM(\delta) = \{\x\in \R^n : {\n{\x}}_* \leq \delta\} \cap K $ for $\delta \geq 0$, and let 
	$$\imbb{\tx{LM}}(\rr) \in \argmin\limits_{\imbb{\x\in \OM(1)}} \ps{\rr}{\x} \, ,$$
	so that by homogeneity for every $k$ the linear minimization oracle on $C(\delta_k)$ is given by
	\begin{equation}
		\LMO_{\OM(\delta_k)}(\rr) = \delta_k \imbb{\tx{LM}}(\rr) \, .
	\end{equation}
	For every $k$, applying the FW method with suitable stopping conditions an approximate minimizer $\x_k$ of $f( \x)$ over $\OM(\delta_k)$ is generated, with an associated lower bound on the objective, \imbb{an affine function in $\y$}:
	\begin{equation}
		f_{k}(\y) := f(\x_k) + \ps{\nabla f(\x_k)}{(\y - \x_k)} \, .
	\end{equation}
	Then the function %in one scalar variable
	\begin{equation}
		\ell_{k}(\delta) := \imbb{\min_{\y \in \OM(\delta)} f_k(\y) } = f_{k}(\delta \imbb{\tx{LM}(\g_k)) \quad\tx{with } \g_k= \nabla f(\x_k)}
	\end{equation}
	is decreasing \imbb{and affine} in $\delta$ and satisfies
	\begin{equation}
		\ell_{k}(\delta) = \min_{\y \in \OM(\delta)} f_k(\y) \leq F(\delta) : =\min_{\y \in \OM(\delta)} f(\y)  \, . 
	\end{equation}
	Therefore, for
	$$\bar{\ell}_k(\delta) = \max_{i \in [1:k]} \ell_i(\delta) \leq F(\delta) $$
	the quantity $\delta_{k + 1}$ can be defined as $\min \{\delta \geq 0 : \imbb{\bar\ell_{k}(\delta)} \leq 0 \}$, \imbb{hence} $F(\delta_{k + 1}) \geq 0$. A complexity bound of ${\mathcal O}(\frac{1}{\eps} \ln(\frac{1}{\eps}))$ was given to achieve precision $\eps$ applying this method, with ${\mathcal O}(1/\eps)$ iterations per subproblem and length of the sequence $\{\delta_k\}$ at most ${\mathcal O}(\ln(1/\eps))$  (see \cite[Theorem 2]{harchaoui2015conditional} for details).
	
	\subsection{Variants for optimization over the trace norm ball}
	
	The FW method has found many applications for optimization problems over the trace norm ball. In this case, as explained in Example \ref{s:mcp}, linear optimization can be obtained by computing the top left and right singular vectors of the matrix $-\nabla f(\Xb_k)$, an operation referred to as 1-SVD (see \cite{allen2017linear}) . 
	
	In the work \cite{freund2017extended}, the FDFW is applied to the matrix completion problem \eqref{eq:mc}, thus generating a sequence of matrices $\{\Xb_k\}$ with ${\n{\Xb_k}}_* \leq \delta$ for every $k$. The method can be implemented efficiently exploiting the fact that for $\Xb$ on the boundary of the nuclear norm ball, there is a simple expression for the face $\F(\Xb)$. For $\Xb \in \mathbb{R}^{m \times n}$ with $\rank(\Xb) = k$ let $\Ub \Db \Vb^{\intercal}$ be the thin SVD of $\Xb$, so that $\Db \in \mathbb{R}^{k \times k}$ is the diagonal matrix of non zero singolar values for $\Xb$, with corresponding left and right singular vectors in the columns of $\Ub \in \mathbb{R}^{m \times k}$ and $\Vb \in \mathbb{R}^{n \times k}$ respectively.  If ${\n{\Xb}}_* = \delta$ then the minimal face of the domain containing $\Xb$ is the set
	\begin{equation}
		\F(\Xb) = \{\Xb \in \mathbb{R}^{m \times n} : \Xb = \Ub \Mb \Vb^{\intercal} \tx{ for } \imbb{\Mb=\Mb\T \ \psd \tx{ with }}
		%, \,  M \succeq 0 \tx{ and } 
		{\n{\Mb}}_{*} = \delta \} \, .
	\end{equation}
	It is not difficult to see that we have $\rank(\Xb_k) \leq k + 1$ for every $k \in \mathbb{N}$, as well. Furthermore, the thin SVD of the current iterate $\Xb_k$ can be updated efficiently both after FW steps and after in face steps. The convergence rate of the FDFW in this setting is still ${\mathcal O}(1/k)$. \\ 
	In the recent work \cite{wang2020frank}, an unbounded variant of the FW method is applied to solve a generalized version of the trace norm ball optimization problem:
	\begin{equation}
		\min_{\Xb \in \mathbb{R}^{m \times n}}\{f(\Xb) : {\n{\Pb\Xb\Qb}}_* \leq \delta \}
	\end{equation}
	with $\Pb, \Qb $ singular matrices. The main idea of the method is to decompose the domain in the sum $S + T$ between the kernel $T$ of the linear function \mbox{$\varphi_{\Pb, \Qb}(\Xb)= \Pb\Xb\Qb$} and a bounded set $S \subset T^{\perp}$. Then gradient descent steps in the unbounded component $T$ are alternated to FW steps in the bounded component $S$. The authors apply this approach to the generalized LASSO as well, using the AFW for the bounded component. 
	
	In \cite{allen2017linear}, a variant of the classic FW using $k$-SVD (computing the top $k$ left and right singular vectors for the SVD) is introduced, and it is proved that it converges linearly for strongly convex objectives when the solution has rank at most $k$.  In \cite{mu2016scalable}, the FW step is combined with a proximal gradient step for a quadratic problem on the product of the nuclear norm ball with the $\ell_1$ ball. Approaches using an equivalent formulation on the spectrahedron introduced in \cite{jaggi2010simple} are analyzed in \cite{ding2020spectral,garber2019linear}.  
	
	\section{Conclusions}
	
	While the concept of the FW method is quite easy to understand, its advantages, witnessed by a multitude of related work, may not be apparent to someone not closely familiar with the subject. Therefore we  considered, in Section \ref{Sec:examples}, several motivating applications, ranging from classic optimization to more recent machine learning problems. As in any line search-based method, the proper choice of stepsize is an important ingredient to achieve satisfactory performance. In Section \ref{s:stp}, we review several options for stepsizes in first order methods, which are closely related both to the theoretical analysis as well as to practical implementation issues, guaranteeing fast convergence. This scope was investigated in more detail in Section \ref{s:cFW} covering main results about the FW method and its most popular variants, including the ${\mathcal O}(1/k)$ convergence rate for convex objectives, affine invariance, the sparse approximation property, and support identification. The account is complemented by a report on recent progress in improving on the ${\mathcal O}(1/k)$ convergence rate in Section~\ref{s:ir}. Versatility and  efficiency of this approach was also illustrated in the final Section \ref{s:E} describing present recent FW variants fitting different optimization frameworks and computational environments, in particular block coordinate, distributed, accelerated, and trace norm optimization. For sure many other interesting and relevant aspects of FW and friends could not find their way into this review because of space and time limitations, but the authors hope to have convinced readers that FW merits a consideration even by non-experts in first-order optimization.
\vskip 1.5em	
	
	\bibliographystyle{spmpsci}      % mathematics and physical sciences
	\bibliography{review} 
	
\end{document}